\documentclass[a4paper]{article}

\usepackage{amsthm}
\usepackage{amsmath}
\usepackage{amssymb}

\usepackage{paralist}

\usepackage{tikz}

\newtheorem{theorem}{Theorem}[section]
\newtheorem{lemma}[theorem]{Lemma}
\newtheorem{corollary}[theorem]{Corollary}

\newcommand{\N}{\mathbb{N}}
\newcommand{\R}{\mathbb{R}}
\newcommand{\Z}{\mathbb{Z}}

\allowdisplaybreaks

\begin{document}

\title{A camel with a less strict diet\\[5pt]
\large A variant of David Klarner's camel-banana problem}
\author{Michiel de Bondt}
\maketitle

\begin{abstract}
A camel can carry $B$ bananas on its back. It can have 
$2$ bananas at a time in its stomach. For each mile the camel 
walks, the amount of bananas in its stomach decreases $1$. As soon 
as the amount of bananas in the camel's stomach is at most $1$, 
it can eat a new banana. When the camel's stomach is empty, the
camel must eat a new banana (in order to be able to continue its
itinerary).

Let there be a stock of $N$ bananas at the border of the
desert. How far can the camel penetrate into the desert,
starting at this point? (Of course it can form new stocks
with transported bananas.)

The case $B=1$ is solved completely. The round trip variant is solved for $B=1$ as well. For $B=2$, the round trip variant is solved for $N$ which are a power of $2$ and $N \le 8$, and estimated up to $1/(N-1)$ miles for general $N$.
\end{abstract}

\section{Introduction}

In the original camel-banana problem by David Klarner, the camel had only room for $1$ banana in its stomach, and was able to carry at most $1$ banana. The original camel-banana problem appeared in a Dutch mathematical competition for students at the end of 1992. During that competition, the author formulated a strategy for all $N$, and proved its optimality for infinitely many values of $N$. Later in 1993, the author proved the optimality of the strategy for all $N$, which was published in \cite{cb}. At the same time, the camel-banana problem was solved in \cite{rote}.

The optimal strategy for the original camel-banana problem was called the UWC-strategy by the author, partly as a reference to `Ultra Wise Camel', but mainly as a reference to `Universitaire WiskundeCompetitie', which was the name of the mathematical competition for students. The partial solution of the original camel-banana problem during the competition and the complete solution of the original camel-banana problem after the competition were completely different. In this paper, we apply the techniques of the partial solution of the original camel-banana problem to solve the case $B=1$ of the above problem completely. That solution was already found by the author in 1996 or earlier. 

In 2024, the author considered the round trip variant of the above problem. We give a solution of the round trip variant of the case $B=1$. The round trip variant of both the original camel-banana problem and the above problem for $B=2$ seem hard to solve in general. To return to the desert border, the camel must lay down bananas on every mile, but what is the offset of the sequence of bananas? For $B = 2$, we will only provide a solution if $N = 2^k$ for some $k \in \N^{*}$ or $N \le 8$. In the first case, the camel can penetrate $k$ miles into the desert and return to the desert border. For $B=2$ and general $N$, we will estimate the round trip variant up to less than $1/(N-1)$ miles. But my guess is that the given strategy for the round trip variant of the case $B=2$ is optimal.

The case $B=2$, but not the round trip variant, can be solved completely by a reduction to the original camel-banana problem, which was found by the author long ago. This goes as follows. We can scale down down the dimensions by a factor $2$, while we preserve the rate of one banana per mile. The result of that is a camel which can eat at most one banana, but in portions of half a banana at a time, and can carry at most $\frac{B}2$ bananas. So the camel at hand has more flexibility in eating bananas than the camel of the original camel-banana problem. But one can prove that this flexibility does not lead to a farther destination in the one way trip variant if $B$ is even. We will not provide a proof of that in this paper. But in the near future, the author will share a generalization of this result.

The camel-banana problem is a discrete variant of the jeep problem. 
\begin{quotation} \small
A jeep can carry at most $F$ units of fuel at any time, and can travel one mile on one unit of fuel (the jeep's fuel consumption is assumed to be constant). At any point in a trip the jeep may leave any amount of fuel that it is carrying at a fuel dump, or may collect any amount of fuel that was left at a fuel dump on a previous trip, as long as its fuel load never exceeds $F$ units.

Let there be $N$ units of fuel stored at the border of the desert. How far can the jeep penetrate into the desert, starting at this point?
\end{quotation}
The jeep problem was solved in \cite{jeep}. The jeep can get as far as
$$
\frac{F}1 + \frac{F}3 + \frac{F}5 + \cdots + \frac{F}{2\lfloor N/F \rfloor - 1} + \frac{N - F\lfloor N/F \rfloor}{2\lfloor N/F \rfloor + 1}
$$
miles into the desert in the one way trip case, and
$$
\frac{F}2 + \frac{F}4 + \frac{F}6 + \cdots + \frac{F}{2\lfloor N/F \rfloor} + \frac{N - F\lfloor N/F \rfloor}{2\lfloor N/F \rfloor + 2}
$$
miles into the desert in the round trip case, where $N$ is any positive real number. A jeep which can carry $F = B+2$ units of fuel can get at least as far as a camel which can carry $B$ bananas on its back and $2$ bananas in its stomach, since it has more options with the fuel it carries. But one can show that in the one way trip case, the following holds: if $N  \le (B+2) \big\lceil \frac{B+2}2 \big\rceil$, then the camel can reach as far into the desert as a jeep which can carry $F = B+2$ units of fuel.

Lower bounds of the destination can be given by way of strategy recipes for the camel. We now describe the key of the upper bounds. Define $p_B: \R \rightarrow \R$ by 
$$
p_B(x) = \big(1 + \tfrac2B\big)^{\lfloor x \rfloor}\big(1 + \tfrac2B(x - \lfloor x \rfloor)\big)
$$
If $x \in \Z$, then $p_B(x)$ is just $\big(1 + \tfrac2B\big)^{x}$. For other values of $x$, $p_B(x)$ is obtained by linear interpolation of $p_B(\lfloor x \rfloor)$ and $p_B(\lceil x \rceil)$.

If the amount of banana fuel in the camel's stomach is an integer, then this amount is $0$, $1$, or $2$. If this amount is $0$, then the camel is about to eat a banana. If this amount is $2$, then the camel has just eaten a banana. So with intervals of one mile, the camel has exactly the fuel of one banana in its stomach. At such moments, the following potential is computed:
$$
P_B := \sum_{\text{$b$ is a remaining banana}} p_B(\text{position of $b$}) - \tfrac{B}2 p_B(\text{position of camel})
$$
The first time $P_B$ is computed, there are $N - 1$ bananas left, all on position $0$. Therefore $P_B = N-1-\frac{B}2$ the first time. 

\begin{lemma}
$P_B$ cannot increase.
\end{lemma}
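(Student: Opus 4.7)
The plan is to parameterize the camel's motion between two consecutive evaluations of $P_B$ by arc length $\ell \in [0,1]$, so $|dx/d\ell| = 1$. A fuel-balance argument (the stomach returns from $1$ to $1$ while exactly one unit of fuel is burned) shows that exactly one banana is consumed during the mile, at some position $y = x(\ell_e)$. Let $k(\ell) \in \{0,1,\ldots,B\}$ denote the number of remaining bananas carried on the back at time $\ell$ (the doomed banana counted while it is being transported). Then $P_B$ changes only from (i)~each carried banana moving with the camel, (ii)~the camel term moving, and (iii)~a jump $-p_B(y)$ at $\ell = \ell_e$ when the eaten banana leaves the sum. Integrating yields
\[
\Delta P_B \;=\; \int_0^1 \bigl(k(\ell)-\tfrac{B}{2}\bigr)\,p_B'(x(\ell))\,\tfrac{dx}{d\ell}\,d\ell \;-\; p_B(y).
\]

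First I would bound the integrand crudely: since $0 \le k(\ell) \le B$ one has $|k(\ell) - B/2| \le B/2$, and together with $|dx/d\ell|=1$ and the identity $(B/2)\,p_B'(x) = p_B(\lfloor x \rfloor)$ (immediate from the piecewise-linear definition of $p_B$), the claim $\Delta P_B \le 0$ reduces to the geometric inequality
\[
\int_0^1 p_B(\lfloor x(\ell)\rfloor)\,d\ell \;\le\; p_B(y).
\]

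The hard part will be this geometric inequality. Write $y = n + \alpha$ with $n = \lfloor y \rfloor$ and $\alpha = y - n$. Since the path has arc length $1$ and contains $y$, its range has diameter at most $1$, forcing $\lfloor x(\ell)\rfloor \in \{n-1, n, n+1\}$. A Lipschitz estimate on the $1$-Lipschitz function $\ell \mapsto \max(0,\,n+1 - x(\ell))$, which takes the value $1 - \alpha$ at $\ell_e$, shows that the arc length $T_+$ spent at the upper level $n+1$ is at most $\alpha$; a symmetric argument handles the lower level $n-1$; and a short accounting shows that visiting both side levels would require arc length strictly greater than $1$, so $T_- \, T_+ = 0$. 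The case $T_+ = 0$ is immediate since then $p_B(\lfloor x(\ell)\rfloor) \le p_B(n) \le p_B(y)$ throughout. In the case $T_- = 0$, the functional equation $p_B(n+1) = (1+2/B)\,p_B(n)$ and $T_0 + T_+ = 1$ reduce the integral to $p_B(n)(1 + (2/B)\,T_+) \le p_B(n)(1 + 2\alpha/B) = p_B(y)$, closing the argument.
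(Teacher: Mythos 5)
Your proposal is correct and follows essentially the same route as the paper: decompose the change of $P_B$ over one mile into the movement gain, bounded pointwise by $\bigl|k(\ell)-\tfrac{B}{2}\bigr|\,\bigl|p_B'\bigr| \le \tfrac{B}{2}p_B'$, versus the cost $p_B(y)$ of the eaten banana, and then exploit that $\tfrac{B}{2}p_B'(\cdot) = p_B(\lfloor \cdot \rfloor)$ is nondecreasing. The only difference is that where the paper simply asserts that the maximal gain is attained by the straight walk from the eating position to one mile beyond it, you prove that extremality claim rigorously via the Lipschitz bounds $T_+ \le \alpha$, $T_-T_+ = 0$ --- a worthwhile tightening, but not a different argument.
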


\begin{proof}
Between two consecutive moments that $P_B$ is computed, the camel walks a mile and eats a banana. Suppose that the camel eats this banana on position $x$. Then the costs for $P_B$ of eating that banana are $p_B(x)$. But during the walk of one mile, the camel can gain for $P_B$. 

Suppose that the camel walks on a position $y \notin \N$. If the camel walks in forward direction with $B$ bananas on its back, then the gain for $P_B$ on that moment is $-\frac{B}2 p'(y)$ per mile for the camel and $B p'(y)$ per mile for the bananas, which adds up to $\frac{B}2 p'(y)$ per mile. If the camel walks in backward direction without bananas on its back, then the gain for $P_B$ on that moment is $\frac{B}2 p'(y)$ per mile, just as in the forward case.

Notice that $\frac{B}2 p'(y) = \big(1 + \tfrac2B\big)^{\lfloor y \rfloor}$ is increasing. So a maximal gain for $P_B$ is obtained if the camel walks from position $x$ to position $x + 1$ with $B$ bananas on its back (or from position $x+1$ to position $x$ without bananas on its back), namely
$$
\int_{x}^{x+1} \tfrac{B}2 p'(y) dy = \tfrac{B}2 \big(p(x+1) - p(x)\big) = \tfrac{B}2 \big(\big(1+\tfrac2B\big) p(x) - p(x)\big) = p(x)
$$
This gain compensates the costs of eating a banana on position $x$ exactly.
\end{proof}

To make that $P_B$ does not decrease during the mile at hand, the camel does not need to make a walk between $x$ and $x+1$ in all cases. If $y < \lceil x \rceil$, then $p'(y)$ is small. If $y > \lceil x \rceil$, then $p'(y)$ is large. Besides carrying $B$ or $0$ bananas depending on the camel's direction, it suffices to maximize the fraction of the mile at hand with large $p'(y)$. If $x = \lceil x \rceil$, then the camel can do that by walking $1$ mile on positions $\ge \lceil x \rceil$. If $x < \lceil x \rceil$, then the camel can do that in two ways. Either it first walks from position $x$ to position $\lceil x \rceil$ and next it walks $x+1 - \lceil x \rceil$ miles on positions $\ge \lceil x \rceil$. Or it first walks $x+1 - \lceil x \rceil$ miles on positions $\ge \lceil x \rceil$ and next it walks from position $\lceil x \rceil$ to position $x$. This insight is useful for finding good strategies for the camel.

We assumed that the number of bananas $N$ is an integer. But there is also an interpretation for arbitrary positive $N$, namely as follows. The camel starts with $\lfloor N \rfloor$ bananas next to him, and $N - \lfloor N \rfloor$ banana units in its stomach. Now $P_B$ is computed for the first time after $N - \lfloor N \rfloor$ miles. Before that, the camel can eat a banana on position $0$ and carry $B$ bananas to position $N - \lfloor N \rfloor$. After that, there are $\lfloor N \rfloor - B - 1$ bananas on position $0$ and $B$ bananas on position $N - \lfloor N \rfloor$ with the camel, so
$$
P_B = \lfloor N \rfloor - B - 1 + \big(B-\tfrac{B}2\big)\big(1 + \tfrac2B(N - \lfloor N \rfloor)\big) = N - 1 - \tfrac{B}2
$$
$P_B$ cannot be larger, because the costs of eating a banana is minimal on position $0$, and the gain of walking on any position $y \in (0,1)$ is at most $\frac{B}2 p'(y) = 1$ per mile. Furthermore a gain of $1$ per mile is attained in $(0,1)$ if the camel carries $B$ bananas when it walks in forward direction and $0$ bananas when it walks in backward direction.

\section{Upper bounds}

\begin{theorem} \label{onewaytrip}
In the one way trip variant, the camel cannot penetrate farther than
$$
p_B^{-1}\bigg(\frac{2N-2-B}B\bigg) + B + 1
$$
miles into the desert if $N \ge B+1$.
\end{theorem}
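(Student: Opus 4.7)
The plan is to apply the invariant $P_B \le N - 1 - \tfrac{B}{2}$ from the preceding lemma at a $P_B$-moment capturing the camel's final approach to its maximum distance $D$.

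I would first treat the canonical scenario: the camel spends its last $B+1$ miles in a single forward push beginning at a $P_B$-moment at position $y^\ast = D - B - 1$, carrying $B$ bananas on its back and one in its stomach. At this moment the $B$ back-bananas all sit at $y^\ast$, so
\[
P_B \;=\; \sum_{b} p_B(\mathrm{pos}(b)) \,-\, \tfrac{B}{2}\,p_B(y^\ast)
\;\ge\; B\,p_B(y^\ast) \,-\, \tfrac{B}{2}\,p_B(y^\ast)
\;=\; \tfrac{B}{2}\,p_B(y^\ast),
\]
the remaining bananas contributing non-negatively. Combining with $P_B \le N-1-\tfrac{B}{2}$ yields $p_B(y^\ast) \le \tfrac{2N-2-B}{B}$, hence $y^\ast \le p_B^{-1}\bigl(\tfrac{2N-2-B}{B}\bigr)$, and the claimed bound on $D$ follows after adding $B+1$.

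For a general strategy the final $B+1$ miles need not form such a clean forward push. My plan for this is to select, among all $P_B$-moments, one at which the camel sits at some position $y$ with at least $B$ remaining bananas at positions $\ge y$, choosing $y$ as large as possible. Such a moment exists because $N-1 \ge B$ bananas sit at position $0$ at the first $P_B$-moment. Each of those $B$ bananas contributes at least $p_B(y)$ to the potential sum, so $\sum_{b} p_B(\mathrm{pos}(b)) \ge B\,p_B(y)$, and the same computation bounds $y$ by $p_B^{-1}\bigl(\tfrac{2N-2-B}{B}\bigr)$. The bound $D \le y + B + 1$ is then intended to follow from a kinematic argument: from this moment onward the camel has its one stomach banana and can access at most $B$ further bananas at positions $\ge y$ without crossing $y$ backwards, giving at most $B+1$ units of fuel for forward travel past $y$.

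The main obstacle is formalizing this kinematic step. The camel may in principle backtrack past $y$ to retrieve earlier-deposited bananas at positions $<y$, costing fuel but possibly extending $D$. Ruling this out requires careful accounting: any backward excursion costs fuel equal to twice the backward distance, and a retrieved banana can help extend $D$ only by being transported forward past $y$ again — which would then cause a later $P_B$-moment at some camel position $y' > y$ to already carry at least $B$ bananas at positions $\ge y'$, contradicting the maximality in our choice of $y$. Making this bookkeeping rigorous, along with handling the fractional-$N$ case via the interpretation given at the end of the introduction, is the technical heart of the proof.
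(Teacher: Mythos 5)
Your first ``canonical scenario'' and the potential computation $\tfrac{B}{2}p_B(y)\le N-1-\tfrac{B}{2}$ match the paper, but the proof has a genuine gap exactly where you flag it: the kinematic claim $D\le y+B+1$ for your maximal qualifying moment is not established, and the contradiction you sketch does not go through. Transporting a retrieved banana forward past $y$ does not produce a later $P_B$-moment at which the camel sits at some $y'>y$ with $B$ bananas at positions $\ge y'$: the potential is only sampled once per mile of travel, so the crossing of $y$ need not coincide with a $P_B$-moment, and even when it does, a camel carrying bananas has them \emph{at} its position rather than ahead of it, and nothing forces $B$ of them to be at positions $\ge y'$. Note also that at your chosen moment there may be far more than $B$ bananas at positions $\ge y$ (the very first $P_B$-moment qualifies with $y=0$ and $N-1$ bananas), so ``at most $B+1$ units of fuel for forward travel past $y$'' is not a count you can read off from the qualifying condition; the relevant budget comes from the number of bananas remaining, not from the number lying ahead of the camel.

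The paper sidesteps all of this by fixing the $P_B$-moment at which exactly $B$ bananas remain. At that moment the camel has precisely $B+1$ units of fuel left in total, so $D\le x+B+1$ is immediate for the camel's position $x$, with no kinematic lemma needed; the potential then bounds $x$ whenever $x\le y$ (all remaining bananas ahead). The only delicate case is $x>y$, i.e.\ the camel is ahead of some remaining banana, and there the paper does not bound $x$ at all: it argues that the strategy which caps every position (of camel and bananas) at $y$ before that moment reaches at least as far, reducing to the case $x\le y$. Some strategy-modification argument of this kind seems unavoidable, and it is precisely the ingredient your proposal is missing; choosing a different (maximal) $P_B$-moment does not remove the need for it.
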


\begin{proof}
Suppose that the camel is on position $x$ when $P_B$ is computed with $B$ bananas left. Say that these $B$ bananas are on positions $\ge y$, including $y$ itself. Suppose first that $x \le y$. Then
$$
N - 1 - \tfrac{B}2 \ge P_B \ge Bp_B(y) - \tfrac{B}2 p_B(x) \ge \tfrac{B}2 p_B(x)
$$
The camel can get at most as far as $x + B + 1 \le p_B^{-1}\big(\frac{2N-2-B}B\big) + B+1$.

Suppose next that $x > y$. We may assume that all remaining bananas are used, so the camel must walk from position $x$ back to position $y$ to get all remaining bananas. After that, it does not have enough fuel to reach $y+B+1$. But the camel would have reached $y+B+1$ if $x$ had been $y$. To accomplish that, the camel can refuse to go farther than $y$ before the moment that $P_B$ is computed with $B$ remaining bananas. So instead of being on position $v$, the camel is on position $\min\{v,y\}$. The same holds for the bananas.
\end{proof}

\begin{theorem} \label{roundtrip}
In the round trip variant, the camel cannot penetrate farther than
$$
p_1^{-1}\bigg(\frac{N}{B}\bigg) + \frac{B}{2}
$$
miles into the desert if $N \ge B$.
\end{theorem}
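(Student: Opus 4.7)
The plan is to parallel the proof of Theorem~\ref{onewaytrip} while threading the round-trip constraint through at the appropriate point. I would again focus on the moment $\tau$ at which $P_B$ is first computed with exactly $B$ bananas remaining, and write $x$ for the camel's position at $\tau$ and $y$ for the minimum position of the surviving bananas. The clipping argument used in Theorem~\ref{onewaytrip} should reduce us to the case $x\le y$, so that $(B/2)\,p_B(x)\le P_B(\tau)\le N-1-B/2$, and hence $p_B(x)\le (2N-2-B)/B$.

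The new ingredient is the return requirement. After $\tau$ the camel has only $B+1$ units of fuel left but must end at position~$0$. Writing $a$ and $b$ for its remaining forward and backward mileage, $a+b\le B+1$ and $a-b=-x$, so $a\le (B+1-x)/2$; no point beyond $(x+B+1)/2$ can therefore be visited after $\tau$. In the case that $D$ is first attained at or after $\tau$, the inequality $D\le (x+B+1)/2$, combined with the previous bound on $p_B(x)$, already rearranges into an inequality of the form claimed in the theorem.

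The delicate case is the one in which $D$ has been reached strictly before $\tau$. The clipping of Theorem~\ref{onewaytrip} is no longer available here, because a trajectory truncated at $y$ prior to $\tau$ need not be able to round-trip to the original $D$. I would instead work at the first moment $t^*$ the camel stands at $D$: the $k^*$ bananas still in play must have been deliverable by the forward leg and must satisfy $k^*+1\ge D$ so that the return can be fuelled, and their positions must be compatible with both requirements. Feeding the resulting lower bound on $\sum p_B(\text{pos}(b))$ at $t^*$ into $P_B(t^*)\le N-1-B/2$ should rearrange to the statement of the theorem. Identifying the extremal configuration of the $k^*$ bananas at $t^*$, and verifying that it saturates the claimed inequality, is where I expect the bulk of the work to lie; in particular one has to show that neither the forward delivery nor the return feasibility leaves enough slack for the bound to fail.
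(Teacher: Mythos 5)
There is a genuine gap: essentially all of the content of the theorem lives in what you call the ``delicate case,'' and for that case you offer only a programme, not an argument. A quick sanity check shows that your first case cannot be the one that matters. Take $B=1$, $N=9$: your case-one bound is $D\le\frac12\big(p_1^{-1}(2N-2-B)+B+1\big)=\frac12\big(p_1^{-1}(15)+2\big)=\frac{13}{6}$, which is strictly smaller than $p_1^{-1}(9)+\frac12=\frac52$ --- a distance the paper later shows is actually attainable in the round trip for $B=1$. So in any optimal strategy the farthest point is reached strictly before the moment only $B$ bananas remain (indeed, in a round trip that moment typically lies deep in the return leg, since the camel must have deposited bananas all along the way back), and the whole burden of the proof falls on the case you defer. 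Your sketch there --- counting the $k^*$ bananas in play at the first visit to $D$ and imposing $k^*+1\ge D$ --- is a crude fuel-counting constraint on the entire return leg; it is not clear how it would produce the tight $p_B$-based bound, and you explicitly leave the extremal analysis open.

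The paper's proof is organized quite differently and you may want to study its localization idea. It does not single out the moment when $B$ bananas remain; instead it isolates the window $M$ of roughly $B+1$ miles straddling the moment $t$ at which the camel stands at its farthest position $c$, bracketed by two computations of $P_B$ at positions $x$ (before $t$) and $z$ (after $t$). A parity argument pins the number of bananas eaten in $M$ to exactly $B$ or $B+1$; since the camel travels $x\to c\to z$ on that much fuel, $\frac12 x+\frac12 z+\frac{B}{2}\ge c$ (or with $\frac{B+1}{2}$), and convexity of $p_B$ then forces the potential to drop by at least $B\,p_B\big(c-\frac{B}{2}\big)$ during $M$ alone. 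Combining this with the global bound that $P_B$ decreases by at most $N$ over the whole trip (it starts at $N-1-\frac{B}{2}$ and ends at $\ge -1-\frac{B}{2}$) yields $B\,p_B\big(c-\frac{B}{2}\big)\le N$, i.e.\ the theorem. The clipping device you borrow from Theorem~\ref{onewaytrip} does reappear in the paper, but only inside this window, to handle the subcase $x>\min\{y,z\}$; it is not the engine of the proof.
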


\begin{proof}
If $B \le N \le B+2$, then the camel can penetrate exactly $\frac{N}{2}$ miles into the desert with returning, and
$$
p_B^{-1}\big(\tfrac{N}{B}\big) + \tfrac{B}2 = p_B^{-1}\big(1 + \tfrac{2}{B}\tfrac{N-B}2\big) + \tfrac{B}2 = \tfrac{N-B}2 + \tfrac{B}2 = \tfrac{N}2
$$
If $N \ge B+2$, then 
$$
p_B^{-1}\big(\tfrac{N}{B}\big) + \tfrac{B}2 \ge p_B^{-1}\big(1 + \tfrac{2}{B}\big) + \tfrac{B}2 = \tfrac{B}2 + 1
$$

Let $c$ be the farthest position which the camel reaches, say on moment $t$, and assume that $c \ge \frac{B}{2} + 1$. The camel starts at least $\frac{B}{2} + 1$ miles before moment $t$, and $P_B$ is computed on a position $x$ $m_x$ miles before moment $t$, for some $m_x \in \big(\frac{B}2,\frac{B}2+1\big]$. The camel ends at least $\frac{B}{2} + 1$ miles after moment $t$, and $P_B$ is computed on a position $z$ $m_z$ miles after moment $t$, for some $m_z \in \big[\frac{B}2-\frac12,\frac{B}2+\frac12\big)$. Let $M$ be the miles between the computations of $P_B$ on positions $x$ and $z$. We distinguish two cases.
\begin{compactitem}

\item $m_z < \frac{B}2$.

Then $B - \frac12 < m_x + m_z < B + 1$, so $m_x + m_z = B$. Furthermore, $m_z = c - z$ and
$$
m_x = B - m_z = m_z + B - 2m_z \le m_z + B - (B-1) = c - (z-1)
$$
Consequently, the camel cannot reach positions $< z-1$ in $M$. So in $M$, the camel cannot transport the bananas which are eaten on the positions $z-1, z-2, \ldots$ during the return trip. If we assume that the camel has one banana in its stomach at the beginning and the end of $M$, then the camel eats $B$ bananas in $M$. Say that at the beginning of $M$, these $B$ bananas are on positions $\ge y$, including $y$ itself.

Suppose first that $x \le \min\{y,z\}$. Then the camel can get as far as $\frac12 x + \frac12 z + \frac{B}2$, namely by walking from $x$ straight to $\frac12 x + \frac12 z + \frac{B}2$ and next straight to $z$. The camel cannot get farther, so $c \le \frac12 x + \frac12 z + \frac{B}2$. The decrement of $P_B$ in $M$ is at least
\begin{align*}
B p_B(y)  - \tfrac{B}2 p_B(x) + \tfrac{B}2 p_B(z)
&\ge B p_B(x) - \tfrac{B}2 p_B(x) + \tfrac{B}2 p_B(z) \\
&= B\big(\tfrac12 p_B(x) + \tfrac12 p_B(z)\big) \\
&\ge Bp_B\big(\tfrac12 x + \tfrac12 z\big) \\
&\ge Bp_B\big(c-\tfrac{B}2\big)
\end{align*}

Suppose next that $x > \min\{y,z\}$. As $c - z = m_z < \frac{B}2$, $c < \frac12 z + \frac12 z + \frac{B}2$ follows. If $y < z$, then the camel must fetch the banana on position $y$ before reaching $c$, so $c < \frac12 y + \frac12 z + \frac{B}2$. So $c < \frac12 \min\{y,z\} + \frac12 z + \frac{B}2$. But the camel can reach $\frac12 \min\{y,z\} + \frac12 z + \frac{B}2$. To accomplish that, the camel can refuse to go farther than $\min\{y,z\}$ before $M$. So instead of being on position $v$, the camel is on position $\min\{v,y,z\}$. The same holds for the bananas. In $M$, the camel walks from position $\min\{y,z\}$ straight to position $\frac12 \min\{y,z\} + \frac12 z + \frac{B}2$, and next straight to position $z$.

\item $m_z \ge \frac{B}2$.

Then $B < m_x + m_z < B + 1\frac12$, so $m_x + m_z = B+1$. Furthermore, $m_z = c - z$ and
$$
m_x = (B + 1) - m_z = m_z + (B + 1) - 2m_z \le m_z + (B + 1) - B = c - (z-1)
$$
Consequently, the camel cannot reach positions $< z-1$ in $M$. So in $M$, the camel cannot transport the bananas which are eaten on the positions $z-1, z-2, \ldots$ during the return trip. If we assume that the camel has one banana in its stomach at the beginning and the end of $M$, then the camel eats $B+1$ bananas in $M$. Say that at the beginning of $M$, these $B+1$ bananas are on positions $\ge y$, including $y$ itself.

Suppose first that $x \le \min\{y,z\}$. Then the camel can get as far as $\frac12 x + \frac12 z + \frac{B+1}2$, namely by walking from $x$ straight to $\frac12 x + \frac12 z + \frac{B+1}2$ and next straight to $z$. The camel cannot get farther, so $c \le \frac12 x + \frac12 z + \frac{B+1}2$. The decrement of $P_B$ in $M$ is at least
\begin{align*}
(B+1)p_B(y) - \tfrac{B}2 p_B(x) + \tfrac{B}2 p_B(z)
&\ge \tfrac{B}2\big(1+\tfrac2{B}\big)p_B(x) + \tfrac{B}2 p_B(z) \\
&= B\big(\tfrac12 p_B(x+1) + \tfrac12 p_B(z)\big) \\
&\ge Bp_B\big(\tfrac12 x + \tfrac12 + \tfrac12 z\big) \\
&\ge Bp_B\big(c-\tfrac{B}2\big)
\end{align*}

Suppose next that $x > \min\{y,z\}$. As $c - z = m_z < \frac{B}2 + \frac12$, $c < \frac12 z + \frac12 z + \frac{B+1}2$ follows. If $y < z$, then the camel must fetch the banana on position $y$ before reaching $c$, so $c < \frac12 y + \frac12 z + \frac{B+1}2$. So $c < \frac12 \min\{y,z\} + \frac12 z + \frac{B+1}2$. But the camel can reach $\frac12 \min\{y,z\} + \frac12 z + \frac{B+1}2$. To accomplish that, the camel can refuse to go farther than $\min\{y,z\}$ before $M$. So instead of being on position $v$, the camel is on position $\min\{v,y,z\}$. The same holds for the bananas. In $M$, the camel walks from position $\min\{y,z\}$ straight to position $\frac12 y + \frac12 z + \frac{B+1}2$, and next straight to position $z$.

\end{compactitem}
The first time $P_B$ is computed, we have $P_B \le N - 1 - \frac{B}2$. The last time $P_1$ is computed, the camel is on a position $\le 1$, so $P_B \ge -\frac{B}2\big(1+\frac2B\big) = -1 - \frac{B}2$. So the decrement of $P_B$ is at most $N$. Hence $Bp_B\big(c-\frac{B}2\big) \le N$, so $c \le p_B^{-1}\big(\frac{N}{B}\big) + \frac{B}2$.
\end{proof}

If $B = 2$, then the upper bound in theorem \ref{roundtrip} cannot be attained if $4 < N < 8$. We use jeep reasoning to obtain an upper bound which can be attained. Before eating its last banana on a position $z \in (0,1]$, we allow the camel to act as a jeep which can carry $B+2 = 4$ units of fuel.  

\begin{theorem} \label{jeepz}
In the round trip variant of the case $B=2$, the camel can penetrate at most
$$
\frac{N-4}6 + 2
$$
miles into the desert if $4 \le N \le 6$, and at most
$$
\frac{N-5}3 + 2
$$
miles into the desert if $6 \le N \le 8$.
\end{theorem}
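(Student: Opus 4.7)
The plan is to apply the jeep reduction described in the paragraph preceding the theorem. Assume the camel reaches depth $c$ in a round trip and let $z \in (0,1]$ be the position at which the last banana is eaten (without loss of generality, since eating it later than $z=1$ can always be postponed). During the phase before this eating, the camel is modeled as a jeep of capacity $F = B + 2 = 4$: it starts at $0$ with $N$ bananas, ends at $z$ with one banana remaining (which the camel then consumes on the final $z$ miles back to the border), and uses up $N - 1$ bananas as fuel.

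I would introduce the crossing densities $f(x), b(x)$ at each $x \in (0, c)$. These satisfy $f - b = 1$ on $(0, z)$, $f = b$ on $(z, c)$, and $\int_0^c (f + b)\,dx = N - 1$. The capacity constraint says that the net forward banana flow past $x$, which equals the fuel consumed in $(x, c]$ plus the banana at $z$ when $x < z$, is bounded by $4 f(x)$. Writing $g := f + b$, $g$ is odd on $(0, z)$ and even on $(z, c)$. Evaluating the capacity at $0^+$ gives $N \le 4 f(0^+)$, so $g(0^+) \ge 3$ whenever $N > 4$. A short exchange argument (trading one unit of extra backward crossing, which costs $2$ units of fuel, for one unit of extra reach past $z$, where each unit costs $2$ units of fuel in the $g = 2$ regime) shows that an optimal strategy minimizes $g$ everywhere: $g \equiv 3$ on $(0, x_0)$, $g \equiv 1$ on $(x_0, z)$, $g \equiv 2$ on $(z, c)$, for some $x_0 \in [0, z]$, where $x_0 = z$ represents the degenerate case in which the middle region collapses.

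Combining the fuel balance $3 x_0 + (z - x_0) + 2(c - z) = N - 1$, i.e., $c = (N - 1 + z - 2 x_0)/2$, with the capacity at $x_0^+$ (forcing $1 + (z - x_0) + 2(c - z) \le 4$, hence $x_0 \ge (N - 4)/3$) and the capacity at $z^+$ (forcing $c - z \le 2$), two sub-cases appear. If $x_0 < z$ (the $g \equiv 1$ region is genuine), optimization at $z = 1$ and $x_0 = (N - 4)/3$ yields $c \le (N + 8)/6 = (N - 4)/6 + 2$, feasible while $(N - 4)/3 \le 1$, i.e., $N \le 7$. If $x_0 = z$, the profile reduces to $g \equiv 3$ on $(0, z), g \equiv 2$ on $(z, c)$ with $c = (N - 1 - z)/2$, and minimizing $z$ at $(N - 5)/3$ yields $c \le (N + 1)/3 = (N - 5)/3 + 2$, feasible while $(N - 5)/3 \le 1$, i.e., $N \le 8$. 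Taking the maximum of the two achievable bounds, the first dominates for $N \in [4, 6]$ and the second for $N \in [6, 8]$, the two agreeing at $N = 6$, which is exactly the pair of inequalities in the theorem.

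The main obstacle is the exchange argument in the second step: one must verify that any crossing configuration with $g(0^+) \ge 5$ or with $g$ inflated anywhere else is strictly suboptimal. Concretely, reducing $g$ by $2$ over any interval of length $\delta$ (preserving the required parity) frees $2\delta$ units of fuel that can be redirected to extend the reach past $z$ by $\delta$, and for $N \le 8$ the resulting profile still satisfies the capacity constraint $N \le 4\cdot f(0^+)$ at the origin. Once this reduction to the three-piece profile is in hand, the two calculations above complete the proof.
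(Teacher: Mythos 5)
Your setup---treat the camel as a capacity-$4$ jeep until the last banana is eaten at some $z\in(0,1]$, count forward and backward crossings at each point, and combine the fuel budget $\int_0^c(f+b)\le N-1$ with capacity inequalities at $x_0$ and at $z$---is exactly the mechanism of the paper's proof, and your two sub-cases reproduce its bounds $(N+8)/6$ and $(N+1)/3$ with the correct ranges of validity. The gap is the step you yourself flag: the claim that one may restrict attention to the three-piece profile $g\equiv 3,1,2$. As written this is an assertion about \emph{optimal} strategies, and the sketched exchange (``reducing $g$ by $2$ on an interval of length $\delta$ frees $2\delta$ units of fuel that can be redirected past $z$'') is not a proof: the freed fuel sits at the wrong position, redirecting it changes the crossing counts elsewhere (in particular $f(z^+)$, on which your constraint $c-z\le 2$ depends), and one must also check that the modified crossing profile is realizable by an actual trajectory respecting the stomach dynamics. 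Until that reduction is carried out, your two computations only bound the reach of strategies that happen to have the minimal profile, which is not an upper bound on the camel.

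The paper avoids the exchange argument entirely by running the same inequalities in the other direction, as pointwise \emph{lower} bounds on the crossing number of an \emph{arbitrary} strategy: wherever more than $4$ units of fuel must pass a point $v$ --- which, by your own accounting, happens for $v<2c-z-3$ when $v<z$, and for $v<c-2$ otherwise --- the capacity forces at least two forward and hence at least three total crossings; every point of $(v,z)$ is crossed at least once and every point of $(z,c)$ at least twice, trivially. Integrating these unconditional bounds against the budget $N-1$ yields $3w+(z-w)+2(c-z)\le N-1$ with $w$ the end of the $3$-crossing region, and the case distinction on whether that end is $2c-z-3$, $c-2$, or governed by $z$ itself is exactly the paper's three cases and your two. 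I recommend you recast your argument in this form: all of your constraints are correct, and stated as lower bounds on crossings they apply to every strategy, so the unproven ``optimal profile'' lemma becomes unnecessary.
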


\begin{proof}
Suppose that $4 \le N \le 8$. We must show that the camel can penetrate at most $\max\big\{\frac{N-4}6 + 2,\frac{N-5}3 + 2\big\}$ miles into the desert with returning if $B = 2$. Let $z$ be the position where the camel eats the last banana. Let $c$ be the farthest position the camel reaches. Then $c \ge 2$. We distinguish $3$ cases.
\begin{compactitem}

\item $z \le c-2$.

Let $0 < v < c-2$. Before eating its last banana on position $z$, the camel must walk from position $v$ to position $c$ and from position $c$ to position $c - 2$, which adds up to $c - v + 2 > 4$ miles. The fuel for these miles comes from position $0$. Since only $4$ units of fuel can be transported at a time by the camel (or a jeep which can carry $4$ units of fuel), the camel must walk at least $2(v-0)$ miles in forward direction between position $0$ and position $v$. So the camel must walk at least $v-0$ miles in backward direction between position $0$ and position $v$. The camel must walk at least $2\big(c-(c-2)\big)$ miles between position $c-2$ and position $c$. Taking the limit of $v \rightarrow c-2$, we infer that
$$
3c - 2 = 2c + (c-2) = 3\big((c-2) - 0\big) + 2\big(c-(c-2)\big) \le N-1
$$
Hence $c \le \frac{N+1}3 = \frac{N-5}3 + 2$.

\item $c-2 \le z \le c-1\frac12$.

Let $0 < v < z$. Before eating its last banana on position $z$, the camel must walk from $v$ to $c$ and from $c$ to $z$, which adds up to $(c - v) + (c - z) > 2(c - z) \ge 3$ miles. The fuel for these miles and for the banana on position $z$ comes from position $0$. Since only $4$ units of fuel can be transported at a time by the camel (or a jeep which can carry $4$ units of fuel), the camel must walk at least $2(v-0)$ miles in forward direction between position $0$ and position $v$. So the camel must walk at least $v-0$ miles in backward direction between position $0$ and position $v$. The camel must walk at least $2(c-z)$ miles between position $z$ and position $c$. Taking the limit of $v \rightarrow z$, we infer that
$$
3c - 2 = 2c + (c-2) \le 2c + z = 3(z-0) + 2(c-z) \le N-1
$$
Hence $c \le \frac{N+1}3 = \frac{N-5}3 + 2$.

\item $c-1\frac12 \le z \le 1$.

Let $w = 2c - z - 3$. Then
$$
c-2 \le 2(c-2) = 2c - 1 - 3 \le w \le 2c - \big(c + 1\tfrac12\big) - 3 = c - 1\tfrac12 \le z
$$
Let $0 < v < w$. Before eating its last banana on position $z$, the camel must walk from $v$ to $c$ and from $c$ to $z$, which adds up to $(c - v) + (c - z) = 2c - z - v > 3$ miles. The fuel for these miles and for the banana on position $z$ comes from position $0$. Since only $4$ units of fuel can be transported at a time by the camel (or a jeep which can carry $4$ units of fuel), the camel must walk at least $2(v-0)$ miles in forward direction between position $0$ and position $v$. So the camel must walk at least $v-0$ miles in backward direction between position $0$ and position $v$. The camel must walk at least $z-w$ miles between position $w$ and position $z$, and at least $2(c-z)$ miles between position $z$ and position $c$. Taking the limit of $v \rightarrow w$, we infer that
\begin{align*}
6c - 9 &\le 6c - 3z - 6 = 3w + 3 \\ 
&= 2c - z + 2w = 3(w-0) + (z-w) + 2(c-z) \le N-1
\end{align*}
Hence $c \le \frac{N+8}6 = \frac{N-4}6 + 2$.

\end{compactitem}
\end{proof}

\begin{theorem}
In the original camel-banana problem, the camel cannot penetrate farther than
$$
\tfrac12 p_2^{-1}\big(\tfrac23(N-1)\big) + 2\tfrac16
$$
miles into the desert if $N \ge 3$.
\end{theorem}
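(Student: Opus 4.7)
My plan is to apply the scaling reduction from the introduction to recast the Klarner problem as a constrained $B=2$-variant, and then combine the resulting $P_{2}$-potential argument with a jeep-style end-game analysis in the spirit of Theorem~\ref{jeepz}.

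First, I would scale the Klarner setup by halving distances and treating each whole banana as two half-bananas. The Klarner camel becomes a $B=2$-variant camel with $2N$ half-bananas, carrying two on its back and two in its stomach, but subject to the added restriction that half-bananas are always eaten in pairs at the same position (reflecting the atomic eating of one whole banana). A Klarner itinerary reaching distance $c$ becomes a scaled itinerary reaching $2c$. The potential $P_{2}$ applies to the scaled problem, with initial value $2N - 2$, taken at the potential moment between the two half-bananas of the opening pair-eat at position $0$ (i.e.\ when the stomach holds exactly one half-banana and $2N - 1$ half-bananas are on the ground).

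Second, a naive appeal to Theorem~\ref{onewaytrip} would give only the weaker estimate $c \le \tfrac{1}{2}p_{2}^{-1}(2N-2) + \tfrac{3}{2}$. To sharpen to the target I would exploit that a Klarner camel has total fuel capacity $2$ and therefore, before eating its last whole banana, can be dominated by a jeep with $F = 2$, analogously to the jeep-with-$F = B + 2$ reasoning invoked in Theorem~\ref{jeepz}. Letting $z$ denote the position (in original Klarner units) where the last banana is eaten, $c = z + 1$ for an optimal one-way trip, since the last banana provides exactly one mile of straight-forward fuel.

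Third, mirroring the three-case analysis of Theorem~\ref{jeepz} on the value of $z$, I would apply the $P_{2}$-potential bound only up to a suitably chosen intermediate potential moment (rather than the last one with two half-bananas remaining), and use a jeep fuel-balance inequality on the tail from that cut-off to $c$. In each case one obtains a linear inequality in $c$, $z$, and $N$; combining them yields the desired estimate $p_{2}(2c - \tfrac{13}{3}) \le \tfrac{2(N-1)}{3}$, which rearranges to the claim.

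The main obstacle will be the case analysis and the matching of the particular constants. The factor $\tfrac{2}{3}$ inside $p_{2}^{-1}$ reflects the $3{:}1$ fuel-to-transport cost of moving atomic Klarner bananas through the pair-eating end-stages (the same ratio responsible for the $\tfrac{1}{3}$ in the jeep-with-$F=2$ formula), while the additive $\tfrac{13}{6} = 2 + \tfrac{1}{6}$ corresponds to the maximum length of the jeep-style tail in the extremal case. Extracting these constants exactly requires careful fuel-balance bookkeeping on the last few miles and the right choice of cut-off potential moment, in the same spirit as the $\tfrac{N-5}{3} + 2$ and $\tfrac{N-4}{6} + 2$ constants arising in Theorem~\ref{jeepz}'s proof.
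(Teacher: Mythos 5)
Your high-level architecture (scale to a pair-eating $B=2$ camel with $2N$ fuel units, combine the $P_2$ potential with a jeep-style endgame, target the inequality $p_2\big(2c-\tfrac{13}3\big)\le\tfrac23(N-1)$) matches the paper's, and you have correctly reverse-engineered the target inequality. But the proposal defers exactly the steps that carry the proof, and two of them are genuine gaps rather than routine bookkeeping. First, the decisive ingredient is a potential bound of the form $2p_2(y)+2p_2(z)\le 2N-2$, where $y$ and $z$ are the smallest and largest positions of the five scaled bananas remaining at a fixed potential moment. The naive potential estimate only gives $p_2(z)+p_2(z')+2p_2(y)\le 2N-2$, where $z'\le z$ is the position of the second-farthest banana, and this is strictly weaker whenever $z'<z$. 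The paper closes this gap by observing that if $z'<z$ then every position in $(z',z)$ has been crossed by an odd number of bananas, so $P_2$ has \emph{already} decreased by $p_2(z)-p_2(z')$, which exactly restores the missing $p_2(z)-p_2(z')$; it also uses that pair-eating forces an immediate expenditure of $p_2(x)$ at the camel's current position. Nothing in your plan produces this doubling of the $p_2(z)$ term, and without it the factor $\tfrac23$ inside $p_2^{-1}$ is unreachable.

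Second, your endgame is parametrized by the position where the last banana is eaten and aims, in each of three cases, at ``a linear inequality in $c$, $z$, and $N$.'' A bound linear in $N$ cannot yield the logarithmic-in-$N$ conclusion $c\le\tfrac12 p_2^{-1}\big(\tfrac23(N-1)\big)+2\tfrac16$ for large $N$; linear case analysis of that kind is only adequate in Theorem~\ref{jeepz} because there $N\le 8$. The paper instead restricts the jeep argument ($F=4$ in scaled units) to the last six units of fuel, obtaining $c\le\tfrac13 y+\tfrac23 z+4\tfrac23$ in terms of the extreme remaining-banana positions, and then couples this to the potential bound through convexity of $p_2$, via $\tfrac13 p_2(y+1)+\tfrac23 p_2(z)\ge p_2\big(\tfrac13 y+\tfrac13+\tfrac23 z\big)\ge p_2\big(c-4\tfrac13\big)$; the matching of the weights $\tfrac13,\tfrac23$ between the jeep bound and the convex combination is where the constant $4\tfrac23$ (hence $2\tfrac16$ after rescaling) actually comes from. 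As written, your plan would not assemble into a proof without these two ideas being supplied.
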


\begin{proof}
If we scale up the situation of the original camel-banana problem by a factor $2$, then we get a camel which can have $2$ bananas at a time in its stomach and can carry $2$ bananas, but the camel eats its bananas in pairs. Suppose that the camel is on position $x$ when $P_2$ is computed with $5$ remaining bananas. Suppose that the smallest position with a banana is $y$ and the largest position with a banana is $z$. Suppose that the farthest two bananas lie on positions $z'$ and $z$.

We first show that the camel cannot get farther than $\frac13 y + \frac23 z + 4 \frac23$. The first case is $z \ge y + 2$. Then $\frac13 y + \frac23 z + 4\frac23 \ge y + 6$. It is clear that with $6$ units of banana fuel, the camel cannot get farther than position $y + 6$.

The second case is $z < y + 2$. Let $c$ be the farthest position. If $c \le z + 4$, then  
$c < \frac13 y + \frac23 z + 4\frac23$. So assume that $c > z + 4$. Take $v$ such that $z < v < c-4$. Walking from position $v$ to position $c$ takes $c  - v > 4$ units of fuel. This fuel comes from position $z$ and smaller positions. Since only $4$ units of fuel can be transported at a time by the camel (or a jeep which can carry $4$ units of fuel), the camel must walk at least $2(v-z)$ miles in forward direction between position $z$ and position $v$. So the camel must walk at least $v-z$ miles in backward direction between position $z$ and position $v$. The camel must walk at least $z-y$ miles between position $y$ and position $z$, and at least $c-(c-4)$ miles between position $c-4$ and position $c$. Taking the limit of $v \rightarrow c-4$, we infer that
$$
(z - y) + 3\big((c-4)-z\big) + \big(c - (c-4)\big) \le 6
$$
Hence $3c - 2z - y - 8 \le 6$ and $c \le \frac13 y + \frac23 z + 4\frac23$.

If $z' < z$, then every position between $z'$ and $z$ is crossed by an odd number of bananas. Therefore $P_2$ has decreased $p_2(z) - p_2(z')$ from the beginning. Since the camel eats the bananas in pairs, the camel immediately eats a banana on position $x$ after $P_2$ is computed with $5$ remaining bananas. So
$$
N - 2 - \big(p_2(z) - p_2(z')\big) \ge P_2 \ge p_2(z) + p_2(z') + 2 p_2(y) + p_2(x) - p_2(x)
$$
Therefore
\begin{align*}
N - 2 &\ge 2 p_2(z) + 2 p_2(y)  = 2 p_2(z) + p_2(y+1) = 3 \big(\tfrac13 p_2(y+1) + \tfrac23 p_2(z)\big) \\
&\ge 3 p_2\big(\tfrac13 y + \tfrac13 + \tfrac23 z\big) \ge 3 p_2\big(c - 4 \tfrac13\big)
\end{align*}
and $c \le p_2^{-1}\big(\frac13 (N - 2)\big) + 4\frac13$. Scaling down the dimensions by a factor $2$ gives $2c \le p_2^{-1}\big(\frac13 (2N - 2)\big) + 4\frac13$, which is equivalent to $c \le \frac12 p_2^{-1}\big(\frac23 (N - 1)\big) + 2\frac16$.
\end{proof}

\begin{corollary} \label{cor}
Let $k \in \N^{*}$ and $0 \le f \le 2$. With $2^k + f$ bananas, the camel of the original camel-banana problem cannot penetrate farther than
$$
\tfrac12 k + 1\tfrac56 + \frac{f-1}{3 \cdot 2^{k-1}}
$$
miles into the desert.
\end{corollary}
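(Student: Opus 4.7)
The plan is to specialize the bound from the preceding theorem to $N = 2^k + f$ and to compute $p_2^{-1}\!\bigl(\tfrac23(N-1)\bigr)$ explicitly. Since $B = 2$, the potential function simplifies to $p_2(x) = 2^{\lfloor x \rfloor}\bigl(1 + (x - \lfloor x \rfloor)\bigr)$, which on each interval $[m-1,m]$ is the straight-line interpolation from $2^{m-1}$ to $2^m$. So the whole task reduces to identifying the correct unit interval and inverting a linear function on it.

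First I would verify that $\tfrac23(2^k + f - 1)$ lies in $[2^{k-1}, 2^k]$ for all admissible $(k,f)$, subject to the theorem's standing hypothesis $N \ge 3$. The upper inequality is equivalent to $2(f-1) \le 2^k$, which is clear from $f \le 2$ and $k \ge 1$. The lower inequality reduces to $2^{k-1} \ge 2(1-f)$, which is automatic for $f \ge 1$ and also holds whenever $k \ge 2$; the remaining case $k = 1$, $f < 1$ is moot, since there $N < 3$ and the bound of the previous theorem is unavailable anyway.

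Once the interval is located, solving $2^{k-1}(1+t) = \tfrac23(2^k + f - 1)$ for $t \in [0,1]$ gives $t = \tfrac13 + \tfrac{2(f-1)}{3 \cdot 2^{k-1}}$, so $p_2^{-1}\!\bigl(\tfrac23(N-1)\bigr) = k - \tfrac23 + \tfrac{2(f-1)}{3 \cdot 2^{k-1}}$. Substituting this into $\tfrac12 p_2^{-1}\!\bigl(\tfrac23(N-1)\bigr) + 2\tfrac16$ and combining $-\tfrac13 + \tfrac{13}{6} = \tfrac{11}{6}$ produces the claimed expression. The only mildly non-trivial step is the range check in the second paragraph; the rest is mechanical substitution and simplification, so I do not expect a real obstacle here.
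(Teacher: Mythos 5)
Your computation is correct as far as it goes, and for $k \ge 2$ it is essentially the paper's own argument: locate $\tfrac23(N-1)$ in the interval $[2^{k-1},2^k]$ and invert the linear piece of $p_2$ there. (The paper first divides by $2^k$ so as to work on $[1,2)$, but that is only a cosmetic difference; your interval check and the resulting value $p_2^{-1}\big(\tfrac23(N-1)\big) = k - \tfrac23 + \tfrac{2(f-1)}{3\cdot 2^{k-1}}$ are right.) The same substitution also legitimately covers $k=1$ with $1 \le f \le 2$, where $N \ge 3$ holds and $\tfrac23(1+f) \in [\tfrac43,2]$.

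The genuine gap is your dismissal of the case $k=1$, $0 \le f < 1$ as ``moot''. The corollary, as stated, asserts the bound $\tfrac12 k + 1\tfrac56 + \tfrac{f-1}{3\cdot 2^{k-1}} = 2 + \tfrac{f}{3}$ for \emph{all} $0 \le f \le 2$, so in particular for $N = 2+f \in [2,3)$; this range is needed later when the corollary is matched by the lower-bound theorem for all $k \in \N^{*}$ and $0 \le f \le 2$. The claim there is neither vacuous nor trivial: for $0 < f < 1$ the asserted bound $2+\tfrac{f}{3}$ is strictly below the obvious fuel bound $c \le N = 2+f$, so it requires an argument, and that argument cannot be the preceding theorem, whose hypothesis $N \ge 3$ fails. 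The paper in fact treats the entire case $k=1$ (i.e.\ $2 \le N \le 4$) by a separate jeep-capacity argument: since at most $2$ units of fuel can be transported at a time, the camel must cover the stretch from $0$ to $c-2$ at least three times and the final stretch of length $2$ once, giving $3(c-2)+2 \le N$ and hence $c \le \tfrac{N+4}{3} = 2+\tfrac{f}{3}$. You need to supply an argument of this kind for $2 \le N < 3$ (or else weaken the corollary, which would break its later use).
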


\begin{proof}
Let $c$ be the distance the camel can penetrate into the desert with $N = 2^k + f$ bananas. Suppose first that $k = 1$. Then $2 \le N \le 4$. Take $v$ such that $0 < v < c-2$. Walking from position $v$ to position $c$ takes $c  - v > 2$ units of fuel. This fuel comes from position $0$. Since only $2$ units of fuel can be transported at a time by the camel (or a jeep which can carry $2$ units of fuel), the camel must walk at least $2(v-0)$ miles in forward direction between position $0$ and position $v$. So the camel must walk at least $v-0$ miles in backward direction between position $0$ and position $v$. The camel must walk at least $c - (c-2)$ miles between position $c-2$ and position $c$. Taking the limit of $v \rightarrow c-2$, we infer that
$$
3\big((c-2)-0\big) + \big(c-(c-2)\big) \le N
$$
Hence $3c - 6 + 2 \le N$ and 
$$
c \le \tfrac13 (N+4) = \tfrac43 + \tfrac13 (2 + f) = \tfrac12 + \tfrac56 + 1 + \tfrac13(f-1) = \tfrac12 k + 1\tfrac56 + \frac{f-1}{3 \cdot 2^{k-1}}
$$

Suppose next that $k \ge 2$. Then $2^k \le \frac43(2^k-1) \le \frac43(N-1) \le \frac43(2^k+1) < 2^{k+1}$. So
\begin{align*}
\tfrac12 p_2^{-1}\big(\tfrac23(N-1)\big) + 2\tfrac16 
&= \tfrac12 k - \tfrac12 + \tfrac12 p_2^{-1}\bigg(\frac{\frac43(N-1)}{2^k}\bigg) + 2\tfrac16 \\
&= \tfrac12 k + 1\tfrac23 + \tfrac12 \bigg(\frac{\frac43(N-1)}{2^k} - 1\bigg) \\
&= \tfrac12 k + 1\tfrac23 + \tfrac12 \frac{N-1-3 \cdot 2^{k-2}}{3 \cdot 2^{k-2}} \\
&= \tfrac12 k + 1\tfrac23 + \frac{f-1 + 4 \cdot 2^{k-2} - 3 \cdot 2^{k-2}}{3 \cdot 2^{k-1}} \\
&= \tfrac12 k + 1\tfrac56 + \frac{f-1}{3 \cdot 2^{k-1}}
\end{align*}
which completes the proof.
\end{proof}

\section{Lower bounds}

\begin{lemma}
Let $B = 1$ and $n \in \N^{*}$. With $3n$ bananas, the camel can penetrate (at least) one mile farther into the desert than with $n+1$ bananas. If the objective is to deliver one of the bananas and return to the desert border, then with $3n$ bananas, the camel can deliver the banana (at least) one mile farther than with $n$ bananas.

Let $N \in \R$ such that $n \le N \le 3n$. With $N$ bananas, the camel can penetrate (at least) $(N - n - 1)/(2n - 1)$ miles farther than with $n + 1$ bananas.
\end{lemma}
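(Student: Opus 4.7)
The plan is to build explicit transport strategies that reduce each instance to a smaller banana count. The basic building block is a \emph{round trip} from $0$ to an intermediate position $\delta$: starting at $0$ with stomach $= 1$, the camel eats a banana (stomach $= 2$), picks up one banana on its back, walks to $\delta$ and drops the banana, walks back to $0$, and (when $\delta \ge 1/2$) eats one banana to restore stomach $= 1$. For $\delta = 1$ this costs exactly $3$ bananas from the stock at $0$ and deposits $1$ banana at $\delta$; for general $\delta \le 1$ the net cost is $1 + 2\delta$ units of fuel, with eats scheduled flexibly at $0$ and at $\delta$ to respect the rule that eating requires stomach $\le 1$.

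For the first statement, after one preparatory eat at $0$ (bringing stomach from $0$ to $1$), the camel performs $n-1$ round trips to position $1$ and then a one-way trip to $1$ (eat, carry one, walk to $1$). The total budget used is $1 + 3(n-1) + 2 = 3n$ bananas. The resulting state at position $1$ is $n-1$ in the stock, $1$ on the back, and stomach $= 1$, which is at least as good as a fresh start at position $1$ with $n+1$ bananas: dropping the back banana leaves the camel in the very state that a strategy starting fresh at $1$ with $n+1$ bananas reaches after its first eat. Hence the total reach is one mile plus the penetration achievable with $n+1$ bananas.

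For the second statement, use the same preparation to bring the equivalent of $n+1$ bananas to position $1$. Reserve one of them at $1$ for the final walk $1 \to 0$, and use the other $n$ to execute the round-trip strategy with $n$ bananas starting at $1$; this delivers a banana at $1$ plus the $n$-banana round-trip distance and returns the camel to $1$ empty. Finally, eat the reserved banana and walk back to $0$, completing a round trip that delivers a banana one mile farther than starting with $n$ bananas from $0$.

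For the third statement, assume $n+1 \le N \le 3n$ (the substantive range, where the stated quantity is nonnegative), and set $\delta := (N-n-1)/(2n-1) \in [0, 1]$. Repeat the first-statement construction with $\delta$ in place of $1$: $n-1$ round trips between $0$ and $\delta$, followed by one one-way trip to $\delta$. The camel walks $(2n-1)\delta$ miles in total, so eats $(2n-1)\delta$ bananas by conservation; the remaining $N - (2n-1)\delta = n+1$ units of fuel end up at position $\delta$ (in the deposit, on the back, and in the stomach), making the state at $\delta$ equivalent to a fresh start there with $n+1$ bananas. The total reach is therefore $\delta$ plus the penetration achievable with $n+1$ bananas, which is precisely the claim. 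The main obstacle is the discrete eating rule (eating only when stomach $\le 1$), but since $\delta \le 1$ and the stomach capacity is $2$, the camel can interleave its eats at positions $0$ and $\delta$ (the only places where bananas are available) to keep its stomach in $[0, 2]$ throughout, turning the continuous fuel-conservation argument into an actual sequence of valid moves.
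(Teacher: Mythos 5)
Your handling of the first two statements coincides with the paper's: shuttle bananas to position $1$ with round trips costing $3$ each, note that the last trip is one--way and leaves one unit in the stomach so that the arrival state equals a fresh start with $n+1$ bananas after its first eat, and for the delivery variant reserve one banana at position $1$ for the walk home. Those parts are correct.

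The third statement is where the actual content of the lemma lies, and your closing sentence papers over a genuine gap. First, the mechanism you offer --- interleaving eats at $0$ and at $\delta$ --- is not available. The total walking distance is forced to be $(2n-1)\delta$ and the net displacement is $\delta$, so the forward walking is exactly $n\delta$ miles; since $B=1$, carrying the $n$ bananas that must end up on the ground at $\delta$ already consumes all $n\delta$ miles of loaded forward walking, so no forward mile is left to transport a banana that will be eaten at $\delta$ (a banana can only be eaten where it has been carried to). Hence essentially all $N-n$ eaten bananas must be eaten at position $0$, and there the stomach rule bites: if the camel eats to a level $s$ and does a round trip, it returns with $s-2\delta$, and for $\delta\in(\frac34,1)$ one checks that the greedy schedule reaches a state at $0$ with stomach in $(1,2\delta)$ --- too full to eat, too empty to complete another round trip. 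So ``a sequence of valid moves'' does not follow from fuel conservation; it has to be exhibited. The paper's device, which you are missing, is to eat two bananas at a time at position $0$ (when the stomach is empty), assign to each such meal one banana that is carried the full distance to $\delta$ and back, and spend the surplus $2-2\delta$ (or $1-\delta$) miles of that meal's range on incrementally advancing the remaining unassigned bananas, which is legitimate because the camel sweeps every point of $[0,\delta]$ on its trips. Some argument of this kind is needed to close your proof.
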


\begin{proof}
Suppose that the camel is at the desert border with $3n$ bananas. It carries $n$ bananas to position $1$, consuming $2$ bananas for each of the $n$ walks to position $1$ and back, making $2n$ consumed bananas altogether. But after carrying the last banana to position $1$, it does not need to go back, so the camel saves one banana in its stomach. So the camel ends on position $1$ with $n+1$ bananas, of which one banana is in its stomach. If the objective is to deliver one of the bananas and return to the desert border, then the camel must save one banana to return to the desert border at the end, so only $n$ bananas are available for the delivery with return from position $1$.

Suppose that the camel is at the desert border with $N$ bananas. It carries $n$ bananas to position $x = (N - n - 1)/(2n - 1)$, using $N - n$ bananas for walking and ending with one banana in its stomach. Indeed, it walks $(2n-1)x$ miles, so it needs to consume $(2n-1)x + 1 =  N - n$ bananas. But these $N - n$ bananas must be eaten on the desert border. This can be done as follows. We have the camel eat two of the $N-n$ bananas at a time, except maybe the first time. So the camel eats $\lfloor (N-n) / 2 \rfloor$ times a pair of bananas.

To each of these $\lfloor (N-n) / 2 \rfloor$ meals, we assign a banana which will be carried from the desert border to position $x$ after that. Then the camel has $2 - 2x$ or $1 - x$ miles left, depending on if it returns to the desert border after the transport. The camel uses those remaining miles to carry unassigned bananas farther. It does not matter how far these bananas have been carried thus far, since the camel meets every position in $[0,x]$. With the fuel of the first meal, the camel only carries unassigned bananas if that meal consists of less than two bananas.
\end{proof}

\begin{theorem}
In the one way trip variant of the case $B=1$, the camel can penetrate exactly
$$
p_1^{-1}(2N-3) + 2
$$
miles into the desert if $N \ge 2$.
\end{theorem}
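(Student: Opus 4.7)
The upper bound is immediate: Theorem~\ref{onewaytrip} specialized to $B = 1$ gives the very quantity $p_1^{-1}(2N-3) + 2$. All the real work is the matching lower bound, for which I plan to iterate the preceding lemma.

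The guiding choice is $n = (3^k + 1)/2$ for a nonnegative integer $k$. This $n$ is a positive integer (since $3^k$ is odd), and satisfies $2n - 1 = 3^k$ and $n + 1 = (3^k + 3)/2$. With this $n$, the quantitative form of the lemma asserts that, for any real $N$ with $n \le N \le 3n$, the camel with $N$ bananas can penetrate at least
\[
\frac{N - n - 1}{2n - 1} \;=\; \frac{2N - 3^k - 3}{2 \cdot 3^k}
\]
miles farther than with $n + 1$ bananas. The point of this calibration is that $n+1$ lands on an integer value of $p_1^{-1}$, namely $p_1^{-1}(2(n+1) - 3) = p_1^{-1}(3^k) = k$.

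I then proceed by induction on $k \ge 0$, with the claim ``the lower bound $p_1^{-1}(2N-3) + 2$ holds for every $N \in [2, (3^{k+1} + 3)/2]$''. The base case $k = 0$ covers $N \in [2, 3]$: at $N = 2$ the camel simply walks two miles while eating its two bananas, giving exactly $2 = p_1^{-1}(1) + 2$; for $N \in (2, 3]$ the lemma with $n = 1$ yields a reach of at least $(N - 2) + 2 = N$, matching $p_1^{-1}(2N - 3) + 2$ on this interval since $p_1$ is linear on $[0, 1]$. For the inductive step I take $N \in ((3^k + 3)/2, (3^{k+1} + 3)/2]$ and apply the lemma with $n = (3^k + 1)/2$: the condition $n \le N \le 3n$ is immediate from the interval, and since $n + 1 = (3^k + 3)/2$ lies at the right endpoint of the previously handled range, the inductive hypothesis supplies a reach of at least $k + 2$ for $n + 1$ bananas. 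The conclusion then reduces to the identity $p_1^{-1}(2N - 3) = k + (2N - 3^k - 3)/(2 \cdot 3^k)$, valid on this interval because $p_1$ is linear on $[k, k+1]$ with $p_1(k) = 3^k$ and $p_1(k+1) = 3^{k+1}$.

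The only slightly delicate point is the boundary value $N = (3^k + 3)/2$: applying the lemma there with the ``natural'' $n = (3^k + 1)/2$ produces $x = 0$ and a circular recursion. My interval decomposition avoids this by placing $(3^k + 3)/2$ at the right endpoint of the $(k - 1)$-th range, where the lemma is used with $n = (3^{k-1} + 1)/2$ and $x = 1$, yielding a reach of at least $1 + (k - 1) + 2 = k + 2$. I expect this endpoint bookkeeping to be the main place to be careful; the remainder is a direct verification.
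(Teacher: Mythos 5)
Your proposal is correct and follows essentially the same route as the paper: the upper bound from Theorem~\ref{onewaytrip}, the choice $n = \frac12 3^k + \frac12$ so that $2n-1 = 3^k$, the induction giving $k+2$ miles for $n+1$ bananas, and one application of the quantitative part of the lemma combined with the linearity of $p_1$ on $[k,k+1]$. The only (harmless) difference is organizational — you fold the integer step into the interval induction by using the quantitative bound at $N = 3n$, whereas the paper invokes the ``one mile farther'' statement separately.
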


\begin{proof}
Suppose that $N \ge 2$. From theorem \ref{onewaytrip}, it follows that the camel cannot penetrate farther into the desert. Take $k \in \N$ such that $\frac12 3^k + 1\frac12 \le N \le \frac12 3^{k+1} + 1\frac12$. Let $n = \frac12 3^k + \frac12$. With $\frac12 3^0 + 1\frac12 = 2$ bananas, the camel can penetrate $2$ miles into the desert. With $\frac12 3^{k+1} + 1\frac12 = 3n$ bananas, the camel can penetrate $1$ mile farther into the desert than with $n + 1 = \frac12 3^k + 1\frac12$ bananas. By induction, it follows that the camel can penetrate $k+2$ miles into the desert with $n + 1$ bananas.

With $N$ bananas, the camel can penetrate 
$$
\frac{N-n-1}{2n-1} + k + 2
= p_1^{-1}\bigg(p_1\bigg(\frac{N-\frac12 3^k - 1\frac12}{3^k}\bigg)\bigg) + k + 2
$$
miles into the desert. Since $\frac12 3^k + 1\frac12 \le N \le \frac12 3^{k+1} + 1\frac12$, it follows that 
$$
0 \le \frac{N-\frac12 3^k - 1\frac12}{3^k} \le \frac{\frac12 3^{k+1} - \frac12 3^k}{3^k} = 1
$$
so 
$$
p_1\bigg(\frac{N - \frac12 3^k - 1\frac12}{3^k}\bigg) 
= 2\cdot\frac{N - \frac12 3^k - 1\frac12}{3^k} + 1 = \frac{2N-3}{3^k}
$$
Therefore the camel can penetrate $p_1^{-1}\big((2N-3)/3^k\big) + k + 2 = p_1^{-1}(2N-3) + 2$ miles into the desert.
\end{proof}

\begin{theorem}
In the round trip variant of the case $B=1$, the camel can penetrate exactly
$$
p_1^{-1}(N) + \tfrac12
$$
miles into the desert if $N \ge 1$.
\end{theorem}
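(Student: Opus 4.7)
The upper bound $p_1^{-1}(N)+\tfrac12$ is the $B=1$ case of Theorem~\ref{roundtrip}, which applies whenever the achievable farthest point satisfies $c\ge\tfrac{B}{2}+1=\tfrac32$; for the remaining small range $N\in[1,3]$ the trivial fuel bound $c\le N/2$ coincides with $p_1^{-1}(N)+\tfrac12$, since $p_1^{-1}(N)=(N-1)/2$ on $[1,3]$.

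For the matching lower bound my plan is to prove by induction on the integer $k\ge 0$ such that $3^k\le N\le 3^{k+1}$ the slightly stronger claim that the camel can both deliver one banana at distance $p_1^{-1}(N)$ and return, and penetrate to $p_1^{-1}(N)+\tfrac12$ and return. The penetration statement follows from the delivery statement by a single substitution at the drop point: just before the delivery plan drops its distinguished banana at the farthest position $d$, the camel eats the banana instead, walks the additional $\tfrac12$ mile to $d+\tfrac12$, and walks $\tfrac12$ mile back to $d$. Since the camel has just walked forward to $d$ while carrying the banana, its stomach there is at most~$1$, so the eat is legal and the stomach after eating is at most~$2$. The one unit of extra fuel required for the half-mile round-trip detour comes precisely from the banana that is no longer delivered, and at $d$ after the detour the camel's position, stomach, and back are identical to
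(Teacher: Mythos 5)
Your upper bound is fine, and the delivery-to-penetration substitution at the drop point is a sound idea (though your justification that the stomach at $d$ is at most $1$ is not automatic --- the camel could have eaten just before reaching $d$; this is repairable, since whatever the stomach level $s$ at $d$, the camel can walk forward until its stomach reaches $1$, eat there, continue to $d+\frac12$ and return to $d$ with stomach $s$ again, or, if $s>\frac32$, walk to $d+\frac12$ first and eat on the way back). The genuine gap is that the heart of the lower bound is missing: you announce an induction proving that with $N$ bananas the camel can deliver one banana at distance exactly $p_1^{-1}(N)$ and return, but the proposal breaks off before giving the inductive step, and this claim is not a routine consequence of anything established so far. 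The paper's delivery lemma only gives the integer version (with $3n$ bananas the camel delivers one mile farther than with $n$), which handles $N=3^k$; for intermediate $N$ you need a fractional interpolation, i.e.\ a shuttle phase moving $3^k$ bananas to position $x=(N-3^k)/(2\cdot 3^k)$ while spending exactly $N-3^k$ bananas and returning to $0$ at the very end. That phase is exactly where the whole-banana and stomach-capacity constraints bite (the paper's analogous one-way interpolation needs its ``assign a carried banana to each meal'' bookkeeping to make the eating schedule legal), so it cannot be waved through.

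It is also worth noting that your route, even if completed, is genuinely different from the paper's. The paper avoids any fractional round-trip delivery lemma: it uses the integer delivery claim only to pre-deposit single bananas at positions $1,2,\dots,k$ as return fuel (costing $\frac12(3^{k+1}-3)$ bananas), then runs the already-proved optimal \emph{one-way} strategy with the remaining bananas and bends that path back at its apex, the arithmetic $\frac12 p_1^{-1}(2N-3^{k+1})+\frac12 p_1^{-1}(3^{k+1})+\frac12=p_1^{-1}(N)+\frac12$ resting on the linearity of $p_1^{-1}$ on the relevant interval. If you want to keep your plan, you must supply the fractional delivery step; otherwise the paper's reduction to the one-way theorem is the shorter road.
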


\begin{proof}
Suppose that $N \ge 1$. From theorem \ref{roundtrip}, it follows that the camel cannot penetrate farther that $p_1^{-1}(N) + \tfrac12$ miles into the desert with returning. Furthermore, the proof of theorem \ref{roundtrip} shows that $p_1^{-1}(N) + \tfrac12$ miles can be attained if $1 \le N \le 3$.

Suppose that $N \ge 2$, and take $k \in \N$ such that $2 \cdot 3^k \le N \le 2 \cdot 3^{k+1}$. Delivering a banana on position $j+1$ with returning can be done with three times as many bananas as delivering a banana on position $j$ with returning. By induction, it follows that delivering a banana on position $j$ with returning can be done with $3^j$ bananas. We first have the camel deliver bananas on positions $1$, $2$, $\ldots$, $k$, and return to the desert border afterwards. This takes $n = (3^{k+1}-3)/(3-1) = 1\frac12 \cdot 3^k - 1\frac12$ bananas.

With the other $N - n$ bananas, the camel can penetrate $p_1^{-1}\big(2(N-n)-3) + 2$ miles into the desert with a one way trip. But it must now bend its path back to the banana on position $k$, so it can penetrate 
\begin{align*}
\tfrac12 \big(p_1^{-1}(2(N-n)-3) + 2\big) + \tfrac12 k 
&= \tfrac12 p_1^{-1}(2N - 3^{k+1}) + 1 - \tfrac12 + \tfrac12(k+1) \\
&= \tfrac12 p_1^{-1}(2N - 3^{k+1}) + \tfrac12 p^{-1}(3^{k+1}) + \tfrac12
\end{align*}
miles into the desert. From $2 \cdot 3^k \le N \le 2 \cdot 3^{k+1}$, it follows that
$$
3^k = 4 \cdot 3^k - 3^{k+1} \le 2N - 3^{k+1} \le 4 \cdot 3^{k+1} - 3^{k+1} = 3^{k+2} 
$$
Consequently, $p_1^{-1}$ is linear between $2N - 3^{k+1}$ and $3^{k+1}$. So the camel can penetrate 
\begin{align*}
\tfrac12 p_1^{-1}(2N - 3^{k+1}) + \tfrac12 p^{-1}(3^{k+1}) + \tfrac12 
&= p_1^{-1}\big(N - \tfrac12 \cdot 3^{k+1} + \tfrac12 \cdot 3^{k+1}\big) + \tfrac12 \\
&= p_1^{-1}(N) + \tfrac12
\end{align*}
miles into the desert.
\end{proof}

\begin{lemma}
Let $B = 2$ and $n \in \N^{*}$. With $4n$ bananas, the camel can penetrate (at least) one mile farther into the desert with returning than with $2n$ bananas. 

Let $N \in \R$ such that $2n \le N \le 4n$. If $N \le 2n+2$, then with $N$ bananas, the camel can penetrate (at least) $(N-2n)/(4n-2)$ miles farther into the desert with returning than with $2n$ bananas. If $N \ge 2n+1$, then with $N$ bananas, the camel can penetrate (at least) $(N-1-2n)/(2n-1)$ miles farther into the desert with returning than with $2n$ bananas. 
\end{lemma}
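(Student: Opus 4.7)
My plan is to construct an explicit transport strategy realizing each of the three sub-claims.

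For the first claim (with $4n$ bananas the camel goes one mile farther than with $2n$ bananas), the camel performs $n-1$ round trips between positions $0$ and $1$. In each such round trip the camel leaves position $0$ with stomach and back both full ($2$ bananas each), walks one mile forward, drops the two back-bananas at position $1$, walks back, eats two bananas at position $0$ to refill the stomach and picks up two more for the back. Each round trip moves two bananas from position $0$ to position $1$, consumes two bananas as fuel, and withdraws four bananas from the stock at position $0$. After the initial loading and the $n-1$ round trips, position $0$ is empty, position $1$ holds $2n-2$ bananas, and the camel is in the ready state at position $0$. One more forward mile followed by an eat at position $1$ brings the camel into the ready state at position $1$ with $2n-3$ bananas still on the ground, for a total of $2n+1$ effective bananas. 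Of these, $2n$ are consumed by the $2n$-banana strategy translated one mile forward (reaching $1+d$ and returning to position $1$), and the last banana fuels the final mile back to $0$. The total fuel $2(n-1)+1+2n+1=4n$ matches.

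For Case~2 ($2n+1\le N\le 4n$, with $\delta:=(N-1-2n)/(2n-1)\in[0,1]$) the same template is used but with the $n-1$ round trips going only to position $\delta$ instead of position $1$, and the $2n$-banana strategy correspondingly shifted by $\delta$; total walking is $2(n-1)\delta+\delta+2n+\delta=2n(1+\delta)$, leaving a slack of $1-\delta\ge 0$ bananas. For Case~1 ($2n\le N\le 2n+2$, with $\delta:=(N-2n)/(4n-2)\in[0,1/(2n-1)]$) only a single round trip to position $\delta$ is used; total walking is $2n+4\delta$, leaving a slack of $(4n-6)\delta\ge 0$ bananas for $n\ge 2$. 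The case $n=1$ is handled separately by simply walking out to $1+\delta$ and back.

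The main obstacle is the book-keeping needed to verify that every step of each strategy is physically executable: the camel's stomach must stay non-negative during every walk, the camel may eat only when its stomach is at most $1$, and the requisite bananas must always be available on the ground or on the back when picked up or eaten. The behaviour between consecutive round trips depends on the size of $\delta$: for $\delta<\tfrac12$ the stomach is too full at the end of a round trip to eat anything at position $0$; for $\delta\in[\tfrac12,1)$ exactly one banana can be eaten there; and at $\delta=1$ two bananas must be eaten. These three regimes induce draws of $2$, $3$ and $4$ bananas respectively on the ground stock at position $0$ per round trip, and the slack computed above is exactly what is needed to absorb the discrepancies. Where the budget is tight, the camel can forgo an optional pickup or eat on the last round trip, leaving it in a slightly off-nominal state at position $\delta$ that nonetheless carries enough bananas to support the shifted $2n$-banana strategy followed by the return walk to position $0$.
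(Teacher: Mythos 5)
Your construction for the first claim (shuttle $2n$ bananas to position $1$ in $n-1$ full round trips plus a final forward mile, then run the $2n$-banana strategy shifted by one mile and spend the saved stomach unit on the return) is essentially the paper's and is sound: every round trip is exactly $2$ miles, so the camel is always back at position $0$ with an empty stomach and the double meal is legal. The gaps are in the two fractional cases, where you correctly name the crux --- the legality of the eating schedule --- but do not resolve it. For $N\ge 2n+1$ your per-round-trip regime accounting (``draws of $2$, $3$ and $4$ bananas respectively \dots per round trip'') cannot be right: if $\delta<\tfrac12$ the camel indeed returns to position $0$ with more than $1$ unit in its stomach and cannot eat there, but it also cannot repeat a no-eat round trip indefinitely, since its stomach drops by $2\delta$ per round trip and would run dry after roughly $1/\delta$ of them. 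So the camel must eat on some round trips and not on others, at moments governed by the accumulated deficit rather than by a single regime fixed by $\delta$; the assertions that ``the slack \dots is exactly what is needed'' and that one can ``forgo an optional pickup or eat on the last round trip'' are precisely where the proof has to live, and they are not proved. The paper avoids the issue by organizing the bookkeeping around meals rather than round trips: the camel eats two bananas at position $0$, spends $2x$ of the resulting $2$ miles ferrying one assigned pair from $0$ to $x$ and back, and spends the remaining $2-2x$ miles pushing some unassigned pair part of the way forward; every meal is then taken at position $0$ with an empty stomach, so legality is automatic and the fuel is always eaten where it is stored.

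For $2n\le N\le 2n+2$ the proposed strategy does not work as described: ``only a single round trip to position $\delta$'' deposits just two bananas there, so the recursive $2n$-banana strategy cannot be launched from position $\delta$ --- it needs all $2n$ bananas at its starting point. The paper instead carries all $2n$ bananas to $x=2\delta=(N-2n)/(2n-1)$, and this is where the hypothesis $N-2n\le 2$ earns its keep: the entire shuttle consumes $(2n-1)x=N-2n\le 2$ units of fuel, which fits into the stomach in a single sitting at position $0$, so again no eating ever happens away from the stock. The camel then runs the $2n$-banana round trip from $x$ but turns around $x/2$ miles early, converting the unused return-to-$x$ leg into the return to $0$; that early turnaround is how the gain of $x/2=\delta$ is actually realized, and it is absent from your description.
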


\begin{proof}
Suppose that the camel is at the desert border with $4n$ bananas. It carries $2n$ bananas to position $1$, consuming $2$ bananas for each of the $n$ walks to position $1$ and back, making $2n$ consumed bananas altogether. But after carrying the last banana to position $1$, it does not need to go back, so the camel saves one banana in its stomach. So the camel ends on position $1$ with $n+1$ bananas, of which one banana is in its stomach. The camel must save one banana to return to the desert border at the end, so only $n$ bananas are available for the delivery with return from position $1$.

Suppose that the camel is at the desert border with $N \le 2n+2$ bananas. It carries $2n$ bananas to position $x = (N - 2n)/(2n - 1)$, using $N - 2n$ bananas for walking and ending with an empty stomach. Indeed, it walks $(2n-1)x$ miles, so it needs to consume $(2n-1)x = N - 2n$ bananas. As $N-2n \le 2$, this can be done immediately at the desert border. From position $x$, the camel can use the $2n$ remaining bananas to penetrate into the desert and return to position $x$. But the camel must return to position $0$. This can be accomplished by returning $x/2$ miles earlier. So the camel can get $x/2 = (N - 2n)/(4n - 2)$ miles farther with return than with $2n$ bananas.

Suppose that the camel is at the desert border with $N \ge 2n+1$ bananas. It carries $2n$ bananas to position $x = (N - 1 - 2n)/(2n - 1)$, using $N - 2n$ bananas for walking and ending with one banana in its stomach. Indeed, it walks $(2n-1)x$ miles, so it needs to consume $(2n-1)x + 1 = N - 2n$ bananas. But these $N - 2n$ bananas must be eaten on the desert border. This can be done as follows. We have the camel eat two of the $N-2n$ bananas at a time, except maybe the first time. So the camel eats $\lfloor (N-2n) / 2 \rfloor$ times a pair of bananas.

To each of these $\lfloor (N-2n) / 2 \rfloor$ meals, we assign two banana which will be carried from the desert border to position $x$ after that. Then the camel has $2 - 2x$ or $1 - x$ miles left, depending on if it returns to the desert border after the transport. The camel uses those remaining miles to carry unassigned bananas farther in pairs. It does not matter how far these bananas have been carried thus far, since the camel meets every position in $[0,x]$. With the fuel of the first meal, the camel only carries unassigned bananas if that meal consists of less than two bananas.
\end{proof}

\begin{theorem} \label{roundtrip2}
In the round trip variant of the case $B=2$, the camel can penetrate at most
$$
p_2^{-1}(N)
$$
miles and more than 
$$
p_2^{-1}(N) - \frac1{N-1}
$$
miles into the desert if $N \ge 2$. The upper bound $p_2^{-1}(N)$ can be attained if either $2 \le N \le 4$ or $N = 2^k$ for some $k \in \N^{*}$.

If $4 < N < 8$, then the upper bound $p_2^{-1}(N) = \frac{N-4}4 + 2$ cannot be attained. In the round trip variant of the case $B=2$, the camel can penetrate exactly
$$
\frac{N-4}6 + 2
$$
miles into the desert if $4 \le N \le 6$, and exactly
$$
\frac{N-5}3 + 2
$$
miles into the desert if $6 \le N \le 8$.
\end{theorem}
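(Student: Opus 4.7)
The plan is to combine the already-established upper bounds with the constructive lower bounds of the preceding lemma, relying on the key identity $p_2(x+1)=2\,p_2(x)$, which is immediate from the definition of $p_2$ and is equivalent to $p_2^{-1}(2y)=p_2^{-1}(y)+1$. Applying Theorem~\ref{roundtrip} with $B=2$ yields the upper bound $p_2^{-1}(N/2)+1$, which by this identity equals $p_2^{-1}(N)$. For $4<N<8$ the sharper bounds $\tfrac{N-4}6+2$ and $\tfrac{N-5}3+2$ from Theorem~\ref{jeepz} take over, and a direct check shows that each is strictly below $2+(N-4)/4=p_2^{-1}(N)$ on the relevant subinterval, so $p_2^{-1}(N)$ is unattainable throughout $(4,8)$.

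For attainment at $N=2^k$ I induct on $k$. The base case $N=2$ follows from Theorem~\ref{roundtrip}, which already gives exactly $1=p_2^{-1}(2)$ mile. At the inductive step I apply the preceding lemma with $n=2^{k-2}$: the hypothesis gives $k-1$ miles from $2n=2^{k-1}$ bananas, and the lemma adds one further mile when $4n=2^k$ bananas are available. For $2\le N\le 4$ the attainment $N/2=p_2^{-1}(N)$ is part of the proof of Theorem~\ref{roundtrip}. In the range $4\le N\le 8$, I apply the lemma with $n=2$: the first sub-case contributes $(N-4)/6$ miles on top of the $2$-mile baseline at $N=4$, giving $\tfrac{N-4}6+2$ for $4\le N\le 6$, and the second sub-case contributes $(N-5)/3$ miles, giving $\tfrac{N-5}3+2$ for $6\le N\le 8$; these match the jeep upper bounds of Theorem~\ref{jeepz} exactly.

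For general $N\ge 2$ I pick $k$ with $2^k\le N\le 2^{k+1}$ and apply the lemma with $n=2^{k-1}$, starting from the $k$ miles just established at $N=2^k$. The linear formula for $p_2^{-1}$ on $[2^k,2^{k+1}]$ is $k+(N-2^k)/2^k$, so the shortfall $p_2^{-1}(N)-(k+\text{gain})$ evaluates to $(N-2^k)(2^k-2)/\bigl(2\cdot 2^k(2^k-1)\bigr)$ in the first sub-case (used on $2^k\le N\le 2^k+2$) and to $(2^{k+1}-N)/\bigl(2^k(2^k-1)\bigr)$ in the second sub-case (used on $2^k+2\le N\le 2^{k+1}$).

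I expect the main obstacle to be verifying that each shortfall is strictly less than $1/(N-1)$ on its subinterval. For the first, after clearing denominators one wants $(N-2^k)(2^k-2)(N-1)<2^{k+1}(2^k-1)$, and since the left side is increasing in $N$ it suffices to check the right endpoint $N=2^k+2$, where the two sides differ by $-4$. For the second, one wants $(2^{k+1}-N)(N-1)<2^k(2^k-1)$, and since the parabolic vertex at $N=(2^{k+1}+1)/2$ lies strictly to the left of $2^k+2$, the left side is decreasing on $[2^k+2,2^{k+1}]$, so only the left endpoint needs checking, where the difference is $-2$. Strictness propagates, and yields the strict lower bound $p_2^{-1}(N)-1/(N-1)$ for every $N\ge 2$.
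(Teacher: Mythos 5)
Your proposal is correct and follows essentially the same route as the paper: the upper bound from Theorem~\ref{roundtrip} via $p_2^{-1}(N/2)+1=p_2^{-1}(N)$, the jeep bounds of Theorem~\ref{jeepz} for $4<N<8$, induction on powers of $2$ via the doubling lemma, and the two sub-cases of that lemma for general $N$ with the resulting shortfall shown to be less than $1/(N-1)$. The only differences are cosmetic: your monotonicity-plus-endpoint verification of the shortfall inequality replaces the paper's observation that the relevant quadratic attains the critical value at two explicit points, and your induction is indexed with $n=2^{k-2}$ rather than $n=2^{k-1}$.
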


\begin{proof}
Suppose that $N \ge 2$. From theorem \ref{roundtrip}, it follows that the camel cannot penetrate farther that $p_2^{-1}\big(\frac{N}{2}) + 1 = p_2^{-1}(N)$ miles into the desert with returning. Furthermore, the proof of theorem \ref{roundtrip} shows that $p_1^{-1}(N)$ miles can be attained if $2 \le N \le 4$.

Take $k \in \N^{*}$ such that $2^k \le N \le 2^{k+1}$, and let $n = 2^{k-1}$. With $2^1 = 2$ bananas, the camel can penetrate $1$ mile into the desert and return. With $2^{k+1} = 4n$ bananas, the camel can penetrate $1$ mile farther into the desert with returning than with $2n = 2^k$ bananas. By induction, it follows that the camel can penetrate $k$ miles into the desert with returning with $2n$ bananas. So if $N = 2^k$, then the camel can penetrate $k = p_2^{-1}(N)$ miles into the desert with returning.

Suppose first that $N \le 2n + 2$. With $N$ bananas, the camel can penetrate
\begin{align*}
\frac{N-2n}{4n-2} + k &= \frac{N-2^k}{2(2^k-1)} + k \\
&= -\frac{(N-2^k)(2(2^k-1)-2^k)}{2(2^k-1)2^k} + p_2^{-1}\bigg(p_2\bigg(\frac{N-2^k}{2^k}\bigg)\bigg) + k \\
&> -\frac{(N-2^k)(2^k-2)}{2(2^k-2)(2^k+1)} + p_2^{-1}\bigg(\frac{N}{2^k}\bigg) + k \\
&= -\frac{N-2^k}{2^{k+1}+2} + p_2^{-1}(N)
\end{align*}
miles into the desert with returning. From theorem \ref{jeepz}, it follows that the penetration depth for $4 \le N \le 6$ is as given. It remains to show that the term to the left of $p_2^{-1}(N)$ is at least $-1/(N-1)$. This follows from the fact that $(N-2^k)(N-1)$ is a quadratic function in $N$ which takes the value $2^{k+1} + 2$ for $N = -1$ and $N = 2^k + 2$.

Suppose next that $N \ge 2n + 2$. With $N$ bananas, the camel can penetrate
\begin{align*}
\frac{N-1-2n}{2n-1} + k &= \frac{N-1-2^k}{2^k-1} + k \\
&= -\frac{1}{2^k-1} + \frac{N-2^k}{2^k(2^k-1)} + p_2^{-1}\bigg(p_2\bigg(\frac{N-2^k}{2^k}\bigg)\bigg) + k \\
&= -\frac{2^k}{2^k(2^k-1)} + \frac{N-2^k}{2^k(2^k-1)} + p_2^{-1}\bigg(\frac{N}{2^k}\bigg) + k \\
&= -\frac{2^{k+1}-N}{2^k(2^k-1)} + p_2^{-1}(N) 
\end{align*}
miles into the desert with returning. From theorem \ref{jeepz}, it follows that the penetration depth for $6 \le N \le 8$ is as given. It remains to show that the the term to the left of $p_2^{-1}(N)$ is more than $-1/(N-1)$. This follows from the fact that $(2^{k+1}-N)(N-1)$ is a quadratic function in $N$ which takes the value $2^k(2^k-1)$ for $N = 2^k$ and $N = 2^k + 1$.
\end{proof}
 
If we want the camel somehow to penetrate $p_2^{-1}(N)$ miles into the desert with returning, then we can cheat. For instance, we can secretly give the camel an extra banana. A more suble way to cheat is not to count the final banana fuel in the camel's stomach as used fuel.

\begin{theorem}
In the round trip variant of the case $B=2$, the camel can penetrate exactly
$$
p_2^{-1}(N)
$$
miles into the desert if $N \ge 2$, provided we do not count the final banana fuel in the camel's stomach as used fuel.
\end{theorem}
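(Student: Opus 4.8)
The plan is to keep the upper bound from Theorem \ref{roundtrip} and to supply a matching cheating strategy for the lower bound. For the upper bound I would observe that the potential argument behind Theorem \ref{roundtrip} refers only to the positions of the bananas and of the camel and to the number of \emph{remaining} (lying‑around) bananas; it never uses that the stomach is empty at the end. At the last moment $P_2$ is computed the camel is at a position $\le 1$ with no remaining bananas, so $P_2\ge -1-\tfrac{B}{2}$ exactly as before, the total decrement is still at most $N$, and refunding the final stomach fuel adds no lying‑around banana that could raise $P_2$. Hence $Bp_B(c-\tfrac{B}{2})\le N$, i.e. $c\le p_2^{-1}(N)$ for $B=2$, survives the new accounting rule unchanged, and all that remains is to show $p_2^{-1}(N)$ is now actually attained.

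For the lower bound I would reduce, exactly as in the proof of Theorem \ref{roundtrip2}, to the range $2n\le N\le 4n$ with $n=2^{k-1}$. The base case $2\le N\le 4$ and the integer depths at the powers of two $N=2^k$ are already attained without cheating in Theorem \ref{roundtrip2}, so it suffices to reach $p_2^{-1}(N)=k+\tfrac{N-2n}{2n}$. The construction I would use is the one dictated by the continuous optimum: set $y=\tfrac{N-2n}{2n}$, first carry $2n$ bananas from the border to position $y$, then run the depth‑$k$ round trip with these $2n$ bananas starting and ending at $y$, and finally walk the last $y$ miles back to the border. The budget closes perfectly: transporting $2n$ bananas over $y$ miles costs $(2n-1)y$ bananas, the round trip consumes $2n$, and the final leg costs $y$, for a total of $2n+(2n-1)y+y=2n+2ny=N$, and the depth reached is $y+k=p_2^{-1}(N)$. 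At $N=4n$ this is precisely the doubling step, so the single construction covers the whole interval.

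The role of the cheat is exactly to make this continuous budget realizable by the discrete camel. In Theorem \ref{roundtrip2} the shortfall $\tfrac{N-2^k}{2^{k+1}+2}$, respectively $\tfrac{2^{k+1}-N}{2^k(2^k-1)}$, arose because the eating rule forces the camel to swallow whole bananas, so the fractional quantities $(2n-1)y$ and $y$ cannot be burned exactly while the camel returns to the border with an empty stomach; it is compelled to arrive with a fractional leftover that is then wasted. Refusing to count the final stomach fuel as used fuel neutralises precisely this waste: the camel may top up with a whole banana for the last return leg and arrive at the border with the fractional remainder still in its stomach, now free of charge. I would therefore carry out the same shuttle‑and‑thrust schedule as in Theorem \ref{roundtrip2}, but arrange that \emph{all} the forced rounding is pushed to the very end of the return trip, so that it lands in the (uncounted) final stomach fuel.

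The main obstacle is this last point: turning the continuous, jeep‑like optimum into a genuine camel schedule that respects the stomach capacity $2$, the carrying capacity $2$, and the rule that a banana may be eaten only when the stomach holds at most $1$, while guaranteeing that the only deviation from an exact burn is the refunded endpoint fuel. Concretely one must route the return fuel through caches placed so that the camel never strands a reserve banana behind it—naively caching the return fuel at $y$ leaves the camel stuck short of it on the way home—and one must check that filling the stomach to $2$ at the start of each thrust is always legal given the level it arrives at. Once this scheduling is verified, the camel reaches $y+k=p_2^{-1}(N)$, which together with the upper bound yields the claimed equality.
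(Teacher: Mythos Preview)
Your upper-bound argument has a real gap. The refund is not a harmless bookkeeping change at the end: the only way the camel can actually reach $p_2^{-1}(N)$ for non--powers of two is to \emph{eat more than $N$ whole bananas} and be forgiven the surplus $s$ that remains in its stomach on arrival. So effectively the camel starts with $N+s$ bananas, and the first value of $P_2$ satisfies only $P_2\le (N+s)-1-\tfrac{B}{2}$, not $N-1-\tfrac{B}{2}$. Your sentence ``refunding the final stomach fuel adds no lying-around banana that could raise $P_2$'' therefore attacks the wrong end of the argument. The paper's fix is to tie the two ends together: if the last banana is eaten at position $x\in(0,1]$, then the leftover is $1-x$, so the camel started with $N+1-x$ bananas, the initial potential is at most $N-x-\tfrac{B}{2}$, and the final potential is exactly $-\tfrac{B}{2}-x$; both bounds shift by the same amount $1-x$, and the decrement is still at most $N$. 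Without this coupling you have not bounded the decrement.

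For the lower bound your continuous budget and target position $y=(N-2n)/2n$ are exactly right and coincide with the paper's, but you stop at the point you yourself flag as the main obstacle. The paper does not re-engineer a schedule; it simply applies the already-proved transport lemma with $N'=N+2-\tfrac{N}{2n}$ bananas (so $2n+1\le N'\le 4n$), which gives an honest camel itinerary reaching depth $\tfrac{N'-1-2n}{2n-1}+k=\tfrac{N-2^k}{2^k}+k=p_2^{-1}(N)$ and returning to the border with $1-\tfrac{N-2^k}{2^k}=2-\tfrac{N}{2n}=N'-N$ units in the stomach, so exactly $N$ bananas are counted. Invoking that lemma is precisely the missing ``scheduling verification'' you allude to; once you supply it, your outline becomes the paper's proof.
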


\begin{proof}
The case $2 \le N \le 4$ follows from theorem \ref{roundtrip2}, so assume that $N \ge 4$.
Suppose that the camel eats it last banana on position $x$ in the return trip. After that meal, $P_2 = -\frac{B}2 \big(1 + \frac2B x\big) = -\frac{B}2 - x$. The camel finishes with $1-x$ units of banana fuel, so it can start with $N + 1 - x$ bananas. Therefore, $P_2 \le N + 1 - x - 1 - \frac{B}2 = N - \frac{B}2 - x$ at the beginning. So $P_2$ decreases at most $N$, just as before.

Take $k \in \N^{*}$ such that $2^k \le N \le 2^{k+1}$, and let $n = 2^{k-1}$. Let $N' = N + 2 - \frac{N}{2n}$. Then $2n + 1 \le N' \le 4n$. With $N'$ bananas, the camel can penetrate
\begin{align*}
\frac{N'-1-2n}{2n-1} + k &= \frac{N'-1-2^k}{2^k-1} + k
= \frac{N-\frac{N}{2^k}+1-2^k}{2^k-1} + k \\
&= \frac{N-2^k}{2^k} + k = p_2^{-1}\Big(\frac{N}{2^k}\Big) + k
= p_2^{-1}(N) 
\end{align*}
miles into the desert with returning. The camel returns with
$$
1 - \frac{N'-1-2n}{2n-1} = 1 - \frac{N-2^k}{2^k} = 2 - \frac{N}{2n}
$$
units of banana fuel in its stomach, so $N$ bananas are counted.
\end{proof}

\begin{lemma}
In the original camel-banana problem, we have the following.
\begin{compactenum}[(i)]

\item Suppose that $y \le z \le y + f$. If the camel is on position $y$ with $1$ banana next to him and $f$ bananas in its stomach, and there is $1$ banana on position $z$, then the camel can get as far as position $\frac13 y + \frac23 z + 2 + \frac13 f$.

\item Suppose that $x \le y \le x + f$. If the camel is on position $x$ with $1$ banana next to him and $f$ bananas in its stomach, and there are $2$ bananas on position $y$, then the camel can get as far as position $\frac13 x + \frac23 y + 2\frac13 + \frac13 f$.

\item Suppose that $x \le y \le x + \frac12$ and $n \in \N$ with $n \ge 2$. If the camel is on position $x$ with $4n - 2$ bananas and there are $2$ bananas on position $y$, then the camel can get $\frac12$ a mile farther than with $2n - 2$ bananas on position $x$ and $2$ bananas on position $\frac12 x + \frac12 y + \frac18$.

\end{compactenum}
\end{lemma}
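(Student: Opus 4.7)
The plan is to exhibit explicit strategies in each case. Parts (i) and (ii) are direct gather-and-march arguments; part (iii) is the substantive claim and requires a two-phase construction that transforms the given configuration into a translate of the reference configuration.

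For (i), the strategy I propose is: the camel picks up the banana at $y$, walks to $z$ (expending $z - y$ units of stomach), picks up the banana at $z$ so that both bananas are on its back, and then marches forward until stomach and back are both exhausted. This reaches $y + f + 2$; the inequality $y + f + 2 \ge \tfrac13 y + \tfrac23 z + 2 + \tfrac13 f$ is equivalent to $f \ge z - y$, which is the hypothesis. For (ii), the analogous strategy works: the camel picks up the banana at $x$, walks to $y$, picks up the two bananas at $y$ (one on its back, one left behind temporarily), and then performs a small local shuttle to recover the leftover. The resulting reach exceeds $\tfrac13 x + \tfrac23 y + 2\tfrac13 + \tfrac13 f$ by an elementary verification using $f \ge y - x$.

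For (iii), the key observation is that the given configuration contains $2n$ more bananas than the reference configuration. I would spend those $2n$ extra bananas to convert the given configuration into the state ``camel at $x + \tfrac12$, at least $2n - 2$ bananas at $x + \tfrac12$, and two bananas at $\tfrac12 x + \tfrac12 y + \tfrac58$''. That state is exactly the reference configuration translated $\tfrac12$ mile forward, so by translational monotonicity the reach from the given configuration is at least $\tfrac12$ mile beyond the reference reach. The conversion runs in two phases. In Phase~1 the camel eats a pair at $x$, walks to $y$ (distance $d = y - x \le \tfrac12$), picks up the two bananas there, walks forward to $\tfrac12 x + \tfrac12 y + \tfrac58$ (a distance $\tfrac58 - \tfrac{d}{2}$ past $y$), drops the pair, and returns to $x + \tfrac12$; the total Phase~1 walk is exactly $d + \tfrac34$ miles, fueled by the pair eaten at $x$. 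In Phase~2 the camel shuttles the remaining $4n - 4$ bananas at $x$ forward to $x + \tfrac12$ via standard $\tfrac12$-mile round trips, delivering at least $2n - 2$ bananas at the shuttle efficiency available over that distance (for $n \ge 2$ this is enough).

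The main obstacle is the delicate fuel bookkeeping in Phase~2 under the pair-eating constraint: a single $\tfrac12$-mile round trip leaves an odd stomach residual that blocks immediate pair-eating on the next trip, so trips must be batched in pairs so that the stomach is again empty after each batch of two round trips. The offset $\tfrac18$ appearing in the reference configuration's banana position is calibrated so that Phase~1's walking distance $d + \tfrac34$ is exactly met by the single pair eaten at the start, for all $d \in [0, \tfrac12]$, and so that the pair dropped in Phase~1 lands at the correct target position $\tfrac12 x + \tfrac12 y + \tfrac58$ to complete the translation.
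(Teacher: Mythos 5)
Your proposal does not go through, because in all three parts you let the camel do things that the original camel--banana problem forbids: that camel can carry at most \emph{one} banana on its back and has room for only \emph{one} banana in its stomach. In (i), ``picks up the banana at $z$ so that both bananas are on its back'' is illegal; with the actual carrying capacity a straight march strands one of the two bananas and reaches only $y+f+1$, which misses the bound. The paper instead consolidates both bananas at a forward point $w=\frac13 y+\frac23 z+\frac13 f$ (carry one banana from $y$ to $w$, go back to fetch the one at $z$, return to $w$; this costs exactly $3w-y-2z=f$ miles of stomach fuel) and only then marches the final $2$ miles. Part (ii) inherits the same defect --- the camel arrives at $y$ with its back already occupied, so it cannot ``pick up the two bananas at $y$'' --- and the ``small local shuttle'' you defer to is precisely the nontrivial content; the paper handles it by first pushing one banana from $x$ forward to $z=\frac12 x+\frac12 y+\frac12 f$, walking back to $y$, and then reducing to (i).

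In (iii) the problems are more serious: ``eats a pair at $x$'', ``picks up the two bananas'', ``drops the pair'', and the ``pair-eating constraint'' in Phase~2 all belong to the scaled-up $B=2$ model that the paper uses only in its upper-bound argument, not to the original problem this lemma is about. Even within your own model the calibration claim fails: Phase~1 walks $d+\frac34\in\big[\frac34,\frac54\big]$ miles while ``the pair eaten at the start'' supplies $2$ miles, so the fuel is not ``exactly met''; and after Phase~1 the camel stands at $x+\frac12$ while the $4n-4$ bananas to be shuttled in Phase~2 are still at $x$, an unaccounted-for return trip. You have identified the correct target configuration (the reference state translated by $\frac12$), but no feasible strategy reaching it is exhibited. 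The paper's construction avoids every one of these issues by performing the $2n-2$ one-mile round trips first (each fuelled by a single whole banana, consuming exactly $4n-4$ bananas), and then spending the last two bananas at $x$ and the two at $y$ in two legs, each of length exactly $2z-x-y=1$ mile with $z=\frac12 x+\frac12 y+\frac12$; it is this constraint that forces the offset $\frac18$, not the one you describe.
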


\begin{proof}
\begin{compactenum}[(i)]

\item Let $w = \frac13 y + \frac23z + \frac13 f$. First, the camel carries a banana from position $y$ to position $w$, and fetches the banana on position $z$, returning to position $w$. This takes
$$
(w - y) + 2(w - z) = 3w - y - 2z = f
$$
miles. Finally, the camel walks from position $w$ to position $w + 2 = \frac13 y + \frac23z + 2 + \frac13 f$, using the $2$ remaining bananas.

\item Let $z = \frac12 x + \frac12 y + \frac12 f$. The camel carries a banana from position $x$ to $z$, and walks back to $y$. This takes
$$
(z - x) + (z - y) = 2z - x - y = f
$$
miles. After that, the camel can get as far as position 
$$
\tfrac13 y + \tfrac23 z + 2\tfrac13 
= \tfrac13 y + \tfrac13 x + \tfrac13 y + \tfrac13 f + 2\tfrac13
= \tfrac13 x + \tfrac23 y + 2\tfrac13 + \tfrac13 f
$$

\item Let $z = \frac12 x + \frac12 y + \frac12$. First, the camel eats $2n-2$ bananas on position $x$ and carries $2n - 2$ banana from position $x$ to position $x + \frac12$, returning to position $x$. This takes $4n - 4$ bananas on position $x$, so $2$ bananas on position $x$ remain. After that, the camel eats a banana on position $x$, carries a banana from position $x$ to position $z$, and walks back to position $y$. This takes
$$
(z - x) + (z - y) = 2z - x - y = 1
$$
miles. Next, the camel eats a banana on position $y$, carries a banana to position $z + \frac18$, fetches the banana on position $z$, and walks from position $z + \frac18$ back to position $x + \frac12$. This takes
\begin{align*}
\big(z + \tfrac18 - y\big) + 2\big(z + \tfrac18 - z\big) + \big(z + \tfrac18 - x - \tfrac12)
&= 4z + \tfrac12 - y - 2z - x - \tfrac12 \\
&= 2z - x - y = 1
\end{align*}
miles. After that, the camel is on position $x+\frac12$ with $2n-2$ bananas, and there are $2$ bananas on position $z + \frac18 = \frac12 x + \frac12 y + \frac18 + \frac12$.

\end{compactenum}
\end{proof}

\begin{theorem}
Let $k \in \N^{*}$ and $0 \le f \le 2$. With $2^k + f$ bananas, the camel of the original camel-banana problem can penetrate exactly
$$
\tfrac12 k + 1\tfrac56 + \frac{f-1}{3 \cdot 2^{k-1}}
$$
miles into the desert.
\end{theorem}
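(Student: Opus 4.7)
The upper bound was established in Corollary~\ref{cor}, so only a matching strategy is needed. I would induct on $k$, using parts~(i), (ii), (iii) of the preceding lemma as the main building blocks. These parts implicitly work in the scaled-up camel model of the original problem (stomach~$2$, carry~$2$, eating bananas in pairs), as indicated by the allowance $0 \le f \le 2$; distances in the lemma are in original miles. For brevity write $D(k,f) := \tfrac{1}{2} k + 1\tfrac{5}{6} + \tfrac{f-1}{3 \cdot 2^{k-1}}$.

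For the base case $k = 1$ (so $N = 2 + f$), arrange the camel at position~$0$ with one banana next to it, $f$ banana-units in the stomach, and one banana at $z = 0$; part~(i) with $y = z = 0$ immediately gives reach $2 + f/3 = D(1, f)$. For the inductive step from $k$ to $k+1$ (so $N = 2^{k+1} + f$), the $f = 0$ case proceeds by iterated application of part~(iii). View ``$2^{k+1}$ bananas at $0$'' as ``$4n - 2 = 2^{k+1} - 2$ at $x = 0$ together with $2$ at $y = 0$'' with $n = 2^{k-1}$, and apply part~(iii) (valid for $k \ge 2$ so that $n \ge 2$). Each application shifts the reserved pair of bananas forward and contributes $\tfrac{1}{2}$ mile of extra reach; tracking the shifted pair by the recurrence $y_{j+1} = y_j/2 + \tfrac{1}{8}$ with closed form $y_j = \tfrac{1}{4}(1 - 2^{-j})$, after $k - 1$ iterations one arrives at the base configuration ``$2$ at $0$, $2$ at $y_{k-1}$'' with $\tfrac{k-1}{2}$ miles banked. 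A final application of part~(ii) with $x = 0$, $y = y_{k-1}$, stomach parameter~$1$ delivers additional reach $\tfrac{2 y_{k-1}}{3} + \tfrac{8}{3}$, and summing yields exactly $D(k+1, 0)$. The edge case $k = 1 \to 2$ (where part~(iii) cannot be invoked because $n \ge 2$ would fail) is handled by part~(ii) alone with $x = y = 0$, $f = 1$, which reaches $\tfrac{8}{3} = D(2, 0)$.

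For $f > 0$, I plan to accommodate the $f$ extra bananas via an initial transport phase that advances the starting configuration by $\tfrac{f}{3 \cdot 2^k}$ miles---precisely the value of $D(k+1, f) - D(k+1, 0)$---before the $f = 0$ strategy is executed from the shifted starting point. The main obstacle is the precise construction of this shift phase: one must design a transport scheme consuming exactly $f$ bananas of fuel while advancing the full $2^{k+1}$-banana bulk by precisely the required distance, respecting the discrete banana constraints of the original problem and interfacing cleanly with the iterated part~(iii) applications so that no stomach fuel is wasted.
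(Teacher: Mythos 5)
Your $f=0$ skeleton is correct and is essentially the paper's argument in unrolled form: the paper proves by induction that a camel at $0$ with $2^k-2$ bananas plus $2$ bananas at a position $y\in[0,\tfrac12]$ reaches $\tfrac12 k+1\tfrac56+\frac{4y-1}{3\cdot 2^{k-1}}$, and specializing to $y=0$ reproduces exactly your iteration of part~(iii) with $y_j=\tfrac14(1-2^{-j})$ followed by a final application of part~(ii).

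The genuine gap is the case $f>0$, and it is not merely a construction you have postponed: the plan fails quantitatively. Advancing the entire $2^{k+1}$-banana bulk by $d$ miles with carrying capacity $1$ requires on the order of $2\cdot 2^{k+1}$ crossings of each point of the interval, hence costs about $2^{k+2}d$ banana units of fuel; a budget of $f$ units therefore buys a shift of only about $\frac{f}{2^{k+2}}$, roughly three quarters of the required $\frac{f}{3\cdot 2^{k}}$ (and for $k\ge 3$ even a crude crossing count already makes the shortfall rigorous). The paper's resolution is structurally different: it never shifts the bulk. It keeps the position $y$ of the two distinguished bananas as a free parameter of the induction hypothesis --- precisely the degree of freedom your closed form $y_j=\tfrac14(1-2^{-j})$, anchored at $y_0=0$, throws away --- and it invests all $f$ surplus bananas in carrying only that pair to $y=\tfrac{f}4$, at a cost of $4y=f$ banana-miles (with a small two-stage shuttle when $1\le f\le 2$ so the $f$ units can legally be eaten at the border). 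Since each mile of advance of the distinguished pair is worth $\frac{4}{3\cdot 2^{k-1}}$ miles of final reach, while each banana-mile spent on it costs only $4$ per mile of advance, this buys exactly the missing $\frac{f}{3\cdot 2^{k-1}}$; the bulk, by contrast, returns only $\frac{1}{2^{k+1}-1}$ miles of reach per banana-mile and is a strictly worse investment. A secondary issue: the lemma is stated in the original, unscaled problem, where the stomach holds at most one banana, so its stomach parameter is implicitly at most $1$; your base case $k=1$ with two units in the stomach via part~(i) is not a legal configuration for $f>1$ (the claim for $N\le 4$ is nevertheless true, and the paper also asserts it without proof).
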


\begin{proof}
From corollary \ref{cor}, it follows that the camel of the original camel-banana problem cannot penetrate farther into the desert.

Suppose first that $k = 1$. Then the camel starts with $2 + f$ bananas, and can penetrate
$$
2 + \tfrac13 f = \tfrac12 + 1\tfrac56 - \tfrac13 + \tfrac13 f = \tfrac12 k + 1\tfrac56 + \frac{f-1}{3 \cdot 2^{k-1}}
$$
miles into the desert.

Suppose next that $k \ge 2$. Let $0 \le y \le \frac12$. Suppose that the camel is on position $0$ with $2^k - 2$ bananas, and there are $2$ bananas on position $y$. We show by induction that the camel can get as far as
$$
\tfrac12 k + 1\tfrac56 + \frac{4y - 1}{3 \cdot 2^{k-1}}
$$
miles into the desert. If $k = 2$, then the camel can get as far as 
$$
\tfrac23 y + 2\tfrac13 + \tfrac13 = 1 + 1\tfrac56 - \tfrac16 + \tfrac23 y = \tfrac12 k + 1\tfrac56 + \frac{4y - 1}{3 \cdot 2^{k-1}}
$$
miles into the desert. If $k \ge 3$, then the camel can get as far as  
$$
\tfrac12 + \tfrac12 (k-1) + 1\tfrac56 + \frac{4\big(\frac12y+\frac18\big)-1}{3 \cdot 2^{k-2}}
= \tfrac12 k + 1\tfrac56 + \frac{4y - 1}{3 \cdot 2^{k-1}}
$$
miles into the desert.

The camel starts with $2^k + f$ bananas. If $f \le 1$, then the camel first carries $2$ bananas to position $y = \frac14 f$, returning to position $0$. This takes $4y = f$ miles. If $f \ge 1$, then the camel first carries a banana to position $w = \frac12 f - \frac12$, returning to position $0$. This takes $2w = f - 1$ miles. Next, it carries a banana to position $y = \frac14 f$, fetches the banana on position $w$, and walks back from position $y$ to position $0$. This takes $y + 2(y-w) + y = 4y - 2w = f - (f-1) = 1$ mile. In both cases, carrying $2$ bananas to position $y = \frac14 f$ with returning to position $0$ takes $f$ miles. After that, the camel can penetrate
$$
\tfrac12 k + 1\tfrac56 + \frac{4y-1}{3 \cdot 2^{k-1}} = 
\tfrac12 k + 1\tfrac56 + \frac{f-1}{3 \cdot 2^{k-1}}
$$
miles into the desert.
\end{proof}

\end{document}